\newtheorem{de}{Definition}[section]
\newtheorem{pro}[de]{Proposition}
\newtheorem{nota}[de]{Note}
\newtheorem{es}[de]{Example}
\newtheorem{claim}[de]{Claim}
\newtheorem{lem}[de]{Lemma}
\newtheorem{co}[de]{Corollary}
\newtheorem{re}[de]{Remark}
\title{On the ampleness and bigness of non-integral divisors}
\author{Stefano Urbinati}
\date{}
\begin{document}

\address{Universit`a degli Studi di Padova\\ Mathematics department \\ Via Trieste 63 - 35121  Padova - Italy}
\email{urbinati.st@gmail.com}

\begin{abstract} Given a Weil non-integral divisor $D$,  it is natural to associate it the line bundle of its integral part $\mathcal{O}_X([D])$. 
In this work we study which of the classical characterizations of ample and big divisors can be extended to non-integral divisors via the corresponding line bundles.
\end{abstract}

\maketitle

A powerful method for studying algebraic varieties is to embed them, if possible, into some projective space $\mathbb{P}^n$. 
As it is well known (\cite{MR0463157}) morphisms in projective spaces are given by line bundles and chosen sections. 
In this note we study ampleness and bigness for $\mathbb{Q}$-divisors and $\mathbb{R}$-divisors through the notion of integral part.

For positivity questions, it is very useful to discuss small perturbations of given divisors and the natural way to do so is by the formalism of $\mathbb{Q}$ and $\mathbb{R}$-divisors. 
Given an algebraic variety $X$, a Cartier \emph{$\mathbb{R}$-divisor} on $X$ is an element of the $\mathbb{R}$-vector space
$$\text{Div}_{\mathbb{R}}(X) \stackrel{\text{def}}{=} \text{Div}(X) \otimes_{\mathbb{Z}}\mathbb{R}.$$  

Equivalently $$D \in \text{Div}_{\mathbb{R}}(X) \Leftrightarrow D=\sum c_i D_i \;{\rm with} \; c_i \in \mathbb{R}, \; D_i \in \text{Div}(X).$$

The study of those classes of divisors began in the first part of the 80's. They are fundamental in the birational study of algebraic varieties, in particular for vanishing theorems (as the vanishing theorem of Kawamata and Viehweg \cite{MR2095472}). Also note that there exist singular varieties for which the canonical divisor is a $\mathbb{Q}$-divisor. 

Our aim is to give a characterization of ampleness for these two classes of divisors following the one for $\mathbb{Z}$-divisors.
The main issue we would like to discuss is that some properties of integral divisors that characterize ampleness are connected to the associated line bundle, that is only well defined for integral divisors. There are different possible ways to overcome this problem. In this work we have chosen to substitute any real divisor by its integral part any time we had to consider the associated line bundle. 

Given an $\mathbb{R}$-divisor $D= \sum_i a_iD_i$ $a_i \in \mathbb{R}$, $D_i \in \text{Div}(X)$ prime divisors, we define its integral part as 
$$[D]= \sum_i [a_i]D_i \in \text{Div}(X).$$

In the first section we collect all the well known characterization of ampleness for integral divisors. In the following two sections we prove which of these  properties can be extended to $\mathbb{Q}$-divisors and $\mathbb{R}$-divisors using the notion of integral part.

In the last section we use the results of the previous sections to characterize big non-integral divisors.

\subsection*{Acknowledgments} This work is an extract of my master thesis back in 2005. I would like to thank my advisor at the time, Angelo Felice Lopez, for his great support and for having introduced me to the world of birational geometry. The author is grateful to the referee for the several suggestions.

\section{Ampleness for $\mathbb{Z}$-divisors}
All basic properties and definitions can be found in the book of Lazarsfeld \cite{MR2095471}.

We aim to give a global characterization of ampleness for $\mathbb{Z}$, $\mathbb{Q}$ and $\mathbb{R}$-divisors. We will try to understand the differences among those three classes and study the common properties. We begin recalling all the characterizations for $\mathbb{Z}$-divisors.

\begin{pro}[Ampleness for $\mathbb{Z}$-divisors]\label{pro:ai} \cite[1.2.6, 1.2.32, 1.2.23, 1.4.13, 1.4.11, 1.4.29, 1.4.23]{MR2095471}
Let $D\in \text{Div}(X)$ be an integral Cartier divisor on a normal projective variety $X$, and let $\mathcal{O}_X(D)$ be the associated line bundle (sometimes we will think $D$ as a Weil divisor by the canonical correspondence). The following statements are equivalent:

\begin{enumerate}

\item There exists a positive integer $m$ such that $\mathcal{O}_X(mD)$ is very ample;

\item Given any coherent sheaf $\mathscr{F}$ on $X$, there exists a positive integer $m_1=m_1(\mathscr{F})$ having the property that $$H^i(X, \mathscr{F}\otimes \mathcal{O}_X(mD))=0 \quad \forall i >0, \; m \geq m_1;$$

\item Given any coherent sheaf $\mathscr{F}$ on $X$, there exists a positive integer $m_2=m_2(\mathscr{F})$ such that $\mathscr{F}\otimes \mathcal{O}_X(mD)$ is globally generated $\forall m \geq m_2$;

\item There is a positive integer $m_3$ such that $\mathcal{O}_X(mD)$ is very ample $\forall m \geq m_3$;

\item For every subvariety $V \subseteq X$ of positive dimension, there is a positive integer $m=m(V)$, together with a non-zero section $0\ne s =s_V \in H^0(V, \mathcal{O}_V(mD))$, such that $s$ vanishes at some point of $V$;

\item For every subvariety $V \subseteq X$ of positive dimension, \\$\chi(V, \mathcal{O}_V(mD))\to +\infty$ as $m \to +\infty$; 

\item \textbf{(Nakai-Moishezon-Kleiman criterion)} $$\int_V c_1(\mathcal{O}_X(D))^{\dim(V)}>0$$ for every positive-dimensional irreducible subvariety $V \subseteq X$;

\item \textbf{(Seshadri's criterion)} There exists a real number $\varepsilon > 0$ such that $$\frac{(D.C)}{\text{mult}_xC}\geq \varepsilon$$ for every point $x \in X$ and every irreducible curve $C \subseteq X$ passing through $x$;

\item Let $H$ be an ample divisor. There exists a positive number $\varepsilon >0$ such that $$\frac{(D.C)}{(H.C)}\geq \varepsilon$$ for every irreducible curve $C\subseteq X$;

\item \textbf{(Via cones)} $\overline{\text{NE}}(X)-\{0\} \subseteq D_{>0}$. 

\item There exists a neighborhood $U$ of $[D]_{num} \in  \text{N}^1(X)_{\mathbb{R}}$ such that \\
$U\backslash \{[D]_{num}\} \subseteq \text{Amp}(X)$.

\end{enumerate}

\end{pro}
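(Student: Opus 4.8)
The plan is to treat Proposition \ref{pro:ai} not as a list but as a cycle of implications, taking $(1)$ as the \emph{definition} that $D$ is ample and organising the remaining conditions into three blocks that I would link in turn: a cohomological block comprising $(1)$, $(2)$, $(3)$, $(4)$; a numerical/Euler-characteristic block comprising $(5)$ and $(6)$; and an intersection-theoretic block comprising $(7)$ through $(11)$. Almost everything is bookkeeping around Serre vanishing and elementary projective geometry, with a single genuinely hard input; the reason to write it out carefully is that the \emph{shape} of this argument is exactly what I will try to transplant to $\mathbb{Q}$- and $\mathbb{R}$-divisors via integral parts in the next sections.

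For the cohomological block I would first note that $(4)\Rightarrow(1)$ is trivial, then prove $(1)\Rightarrow(2)$ by pulling back Serre's vanishing theorem along the embedding $\varphi\colon X\hookrightarrow\mathbb{P}^N$ defined by $|mD|$ and absorbing the intermediate residues $D,2D,\dots,(m-1)D$ by applying the statement to the finitely many sheaves $\mathscr{F}\otimes\mathcal{O}_X(jD)$ (Cartan--Serre--Grothendieck). Next I would deduce $(2)\Rightarrow(3)$ from the evaluation sequence $0\to\mathfrak{m}_x\mathscr{F}\to\mathscr{F}\to\mathscr{F}/\mathfrak{m}_x\mathscr{F}\to 0$ twisted by $\mathcal{O}_X(mD)$, since the $H^1$ of the left-hand term vanishes for $m\gg 0$ by $(2)$. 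Then $(3)\Rightarrow(4)$: applying global generation of all large multiples to ideal sheaves of length-two subschemes and of tangent directions shows that $\mathcal{O}_X(mD)$ separates points and tangent vectors for $m\gg 0$, i.e.\ is very ample. The block $(5)$--$(6)$ I would bolt on using asymptotic Riemann--Roch: for $V$ of dimension $d$, $\chi(V,\mathcal{O}_V(mD))$ is a numerical polynomial in $m$ of degree $\le d$ whose degree-$d$ coefficient is $\frac{1}{d!}\int_V c_1(\mathcal{O}_X(D))^d$, so that $(6)$ is visible both from $(2)$ (which forces $\chi=h^0$ eventually) and from $(7)$, while $(5)$ follows softly from very ampleness and its real job is to feed the converse below.

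The intersection-theoretic block is where the only serious obstacle sits. The easy direction $(1)\Rightarrow(7)$ is $\big(\varphi^{*}\mathcal{O}_{\mathbb{P}^N}(1)\big)^d\cdot V>0$ for $V$ of dimension $d$. The converse $(7)\Rightarrow(1)$ is the \textbf{Nakai--Moishezon--Kleiman theorem}, which I would prove by induction on $\dim X$: produce an effective divisor in $|mD|$, restrict to it, and play asymptotic Riemann--Roch against the vanishing already established on lower-dimensional subvarieties — this is the step I expect to be the main obstacle, and the one with no shortcut. With $(1)\Leftrightarrow(7)$ in hand, Kleiman's theorem identifies $\text{Amp}(X)$ with the interior of the nef cone in $\text{N}^1(X)_{\mathbb{R}}$, which is dual to $\overline{\text{NE}}(X)$; then closedness of $\overline{\text{NE}}(X)$ together with compactness of a hyperplane slice of it yields at once $(10)$, $(11)$, and — via the same compactness applied to $C\mapsto (D.C)/(H.C)$ — the uniform Seshadri bound $(9)$, while $(8)\Leftrightarrow(9)$ follows by comparing $\text{mult}_xC$ with $(H.C)$ for $H$ very ample. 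Closing these three blocks completes the cycle; for all the details I would simply point to \cite[1.2.6, 1.2.32, 1.2.23, 1.4.13, 1.4.11, 1.4.29, 1.4.23]{MR2095471}.
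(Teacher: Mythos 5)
The paper does not actually prove Proposition~\ref{pro:ai}: it is recorded as a compendium of standard facts with a pointer to Lazarsfeld \cite{MR2095471}, and your sketch likewise ends by deferring the one serious step (Nakai--Moishezon--Kleiman) to the same reference, so there is essentially nothing to contrast. Your blockwise organisation --- Serre/Cartan--Serre--Grothendieck for $(1)$--$(4)$, asymptotic Riemann--Roch for $(5)$--$(6)$, Nakai--Moishezon, Kleiman's theorem, and cone duality for $(7)$--$(11)$ --- is the standard decomposition and matches how Lazarsfeld arranges the material. One small inaccuracy worth fixing: for $H$ very ample the inequality $\mathrm{mult}_xC\le(H\cdot C)$ yields $(9)\Rightarrow(8)$ only, not a two-way bridge; the direction $(8)\Rightarrow(9)$ should pass through ampleness (Seshadri gives $D$ ample, and then $D-\varepsilon H$ nef for small $\varepsilon$ gives $(9)$), which is harmless for the overall argument since every item is anchored to ampleness anyway, but as written the claim of a direct comparison would not hold up.
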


\begin{pro}\label{pro:ampio} Every divisor that verifies at least one of the properties of the Proposition \ref{pro:ai} is an ample $\mathbb{Z}$-divisor.
\end{pro}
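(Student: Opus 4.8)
The plan is to observe that Proposition~\ref{pro:ampio} is an immediate formal consequence of Proposition~\ref{pro:ai}. Indeed, item~(1) of that list --- the existence of a positive integer $m$ for which $\mathcal{O}_X(mD)$ is very ample --- is precisely the definition of an ample Cartier divisor on the normal projective variety $X$. Since Proposition~\ref{pro:ai} asserts that conditions~(1)--(11) are mutually equivalent, a divisor $D \in \text{Div}(X)$ satisfying any one of them in particular satisfies~(1), and is therefore ample. So the argument amounts to no more than pointing to the equivalence already recorded, together with the standing hypotheses that $X$ be normal and projective and that $D$ be an integral Cartier divisor.

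The one point worth spelling out is that each equivalence gathered in Proposition~\ref{pro:ai} is genuinely biconditional and does not covertly presuppose ampleness: the Nakai--Moishezon--Kleiman criterion reads ``$D$ is ample $\iff$ all the intersection numbers $\int_V c_1(\mathcal{O}_X(D))^{\dim V}$ are positive'', Seshadri's criterion reads ``$D$ is ample $\iff$ its Seshadri constant is positive'', Kleiman's theorem supplies the two cone-theoretic reformulations~(10) and~(11), and the cohomological and global-generation statements~(2)--(6) are the usual consequences and converses of Serre vanishing. All of these are exactly the results cited from~\cite{MR2095471}, so they may be invoked directly; there is nothing new to verify here.

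Accordingly, I do not anticipate any genuine obstacle. The only ``hard part'' is the content already imported in Proposition~\ref{pro:ai}, namely the several deep theorems (Nakai--Moishezon--Kleiman, Seshadri, Kleiman's description of the ample cone) bundled there --- but those are quoted, not reproved. The purpose of isolating Proposition~\ref{pro:ampio} is organizational: it fixes, in the integral case, the baseline fact that any single one of the eleven conditions already forces ampleness, a uniformity which --- as the next two sections show --- breaks down once one passes to $\mathbb{Q}$- and $\mathbb{R}$-divisors and replaces a divisor by its integral part.
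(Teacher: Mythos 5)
Your argument is correct and is precisely what the paper has in mind: Proposition~\ref{pro:ampio} is stated without a separate proof because, as you say, condition~(1) of Proposition~\ref{pro:ai} is the definition of ampleness, so the mutual equivalence cited from~\cite{MR2095471} immediately yields the claim. Nothing further needs to be checked.
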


\section{Ampleness for $\mathbb{Q}$-divisors and $\mathbb{R}$-divisors}

The simpler way to extend the definition of ampleness to non-integral divisors is the following.

\begin{de}[Amplitude for $\mathbb{Q}$ and $\mathbb{R}$-divisors] \label{de:air} A $\mathbb{Q}$-divisor \\ $D \in \text{Div}_{\mathbb{Q}}(X)$ (resp. $\mathbb{R}$-divisor $D \in \text{Div}_{\mathbb{R}}(X)$) is said to be \emph{ample} if it can be written as a finite sum $$D= \sum c_iA_i$$ where $c_i >0$ is a positive rational (resp. real) number and $A_i$ is an ample Cartier divisor.

\end{de}

Our aim is to understand when this definition is equivalent to some of the properties of Proposition \ref{pro:ai}. Some of them are automatically transferable (and for those we will directly prove that they are equivalent to the concept of ampleness for $\mathbb{Q}$ and $\mathbb{R}$-divisors). Other properties depend directly on the line bundle and not on the divisor (as \ref{pro:ai} $(1)$ or $(2)$). In this cases we chose to substitute the divisor $mD$ by its integral part $[mD]$ whenever the divisor was not integral and see if the equivalence was still valid.

We begin with a negative answer for one of the statements.

\begin{es}  \label{es:nna}  The affirmation (1) in Proposition \ref{pro:ai}, when we replace $mD$ with its integral part, is not equivalent to the concept of ampleness for $\mathbb{Q}$-divisors.

To prove that it is not possible to extend (1), or rather that the property of existence of an integer $m$ such that $[mD]$ is very ample is not sufficient to characterize the amplitude for a $\mathbb{Q}$-divisor,  it is enough to find an example. We consider a ruled rational surface $X_e$, $e \geq 2$ defined as $\mathbb{P}(\mathscr{E})= \mathbb{P}(\mathcal{O}\oplus\mathcal{O}(-e))$ over $\mathbb{P}^1$.
We now consider a divisor in the form $$D= \frac{3}{2}C_0 + (e+1)f$$ where $C_0$ is a section and $f$ is a fiber of the canonical morphism over $\mathbb{P}^1$. \\
Now $$[D]=C_0 + (e+1)f$$ is a very ample divisor by \cite{MR0463157} (Theorem V.2.17) but 
$$D.C_0= \left( \frac{3}{2}C_0 + (e+1)f \right).C_0  = 1 - \frac{e}{2} \leq0$$ so that $D$ is not ample.
\end{es}

\begin{re}
For the same reason the affirmation (5) in Proposition \ref{pro:ai}, when we replace $mD$ with its integral part, is not equivalent to the concept of ampleness for $\mathbb{Q}$-divisors. In this case we will replace ``$\exists m$'' by ``$ \forall \;m \geq m_4$''.
\end{re}

\begin{nota}[A useful way to write divisors] \label{nota:utile}
Let $D$ be an $\mathbb{R}$-divisor, and suppose that $D=\sum a_iD_i$ where $a_i \in \mathbb{R}$ and $D_i\in \text{Div}(X)$, \emph{not necessarily prime}. For every integer $m \geq 1$ we can write 
$$mD= m \sum a_i D_i= \sum \left([ma_i]D_i + \{ma_i\}D_i \right),$$
where $[\cdot]$ is the integral part and $\{\cdot\}$ is the fractional part, so that we obtain:
$$[mD]= \left[ \sum \left(  [m a_i]D_i +  \{m a_i\}D_i\right)\right]= \sum [m a_i]D_i + \left[ \sum\{m a_i\}D_i\right].$$ 
Now $\left\{ \left[\sum \{m a_i\}D_i\right]\right\}=\{T_m\}$ is a finite set of integral divisors, $\{T_m\}=\{T_{k_1}, \dots, T_{k_s}\}$.
\end{nota}

\begin{re} If $D$ is an integral divisor, $D$ is ample in the sense of $\mathbb{Z}$-divisors (\ref{pro:ai}) if and only if it is ample in the sense of $\mathbb{R}$-divisors (\ref{de:air}).
\end{re}
\begin{proof}
If $D$ is ample in the sense of $\mathbb{Z}$-divisors, obviously $D$ can be written as $1 \cdot D$ where $D$ is an ample divisor, so that it is an ample real divisor.\\
If $D=\sum c_i A_i $ in an ample $\mathbb{R}$-divisor, by Note \ref{nota:utile} we can write $[mD]=\sum[ma_i]A_i +T_k$ for finitely many divisors $T_k$. As $A_1$ is ample, by Proposition \ref{pro:ai}, there exists an integer $r>0$ such that $rA_1 + T_k$ is globally generated for every $k$ and there exists an integer $s>0$ such that $tA_i$ is very ample for all $i$ and for all $t\geq s$. Then, if $m \geq \frac{r + s}{a_i}\; \forall i$, we have $$[mD]= \sum_{i\geq 2}[ma_i]A_i + ([ma_1] -r)A_1 + (rA_1 + T_k)$$ that is a sum of a very ample and a globally generated integral divisor, that is very ample. But in this case $[mD]=mD$ and we get the statement.
\end{proof}

We can now state the following proposition: 

\begin{pro}[Ampleness for $\mathbb{Q}$-divisors]\label{pro:ar}
Let $D\in \text{Div}_{\mathbb{Q}}(X)$ be a Cartier divisor on a normal projective variety $X$, and let $\mathcal{O}_X([D])$ be the associated line bundle (sometimes we will think $[D]$ as a Weil divisor by the canonical correspondence). The following statements are equivalent to the definition of ampleness for $\mathbb{Q}$-divisors (Definition $\ref{de:air}$):

\begin{itemize}

\item[(I)] Given any coherent sheaf $\mathscr{F}$ on $X$, there exists a positive integer $m_1=m_1(\mathscr{F})$ having the property that $$H^i(X, \mathscr{F}\otimes \mathcal{O}_X([mD]))=0 \quad \forall i >0, \; m \geq m_1;$$

\item[(II)] Given any coherent sheaf $\mathscr{F}$ on $X$, there exists a positive integer $m_2=m_2(\mathscr{F})$ such that $\mathscr{F}\otimes \mathcal{O}_X([mD])$ is globally generated $\forall m \geq m_2$;

\item[(III)] There is a positive integer $m_3$ such that $\mathcal{O}_X([mD])$ is very ample $\forall m \geq m_3$;

\item[(IV)] For every subvariety $V \subseteq X$ of positive dimension, there is a positive integer $m_4=m_4(V)$, such that for every $m \geq m_4$ there exists a non-zero section $0\ne s =s_{V,m} \in H^0(V, \mathcal{O}_V([mD]))$, such that $s$ vanishes at some point of $V$;

\item[(V)] For every subvariety $V \subseteq X$ of positive dimension, \\$\chi(V, \mathcal{O}_V([mD]))\to +\infty$ as $m \to +\infty$; 

\item[(VI)] \textbf{(Nakai-Moishezon-Kleiman criterion)} $$\int_V c_1(\mathcal{O}_X(D))^{\dim(V)}>0$$ for every irreducible positive-dimensional subvariety $V \subseteq X$;

\item[(VII)] \textbf{(Seshadri's criterion)} There exists a real number $\varepsilon > 0$ such that $$\frac{(D.C)}{\text{mult}_xC}\geq \varepsilon$$ for every point $x \in X$ and every irreducible curve $C \subseteq X$ passing through $x$;

\item[(VIII)] Let $H$ be an ample divisor. There exists a positive number $\varepsilon >0$ such that $$\frac{(D.C)}{(H.C)}\geq \varepsilon$$ for every irreducible curve $C\subseteq X$;

\item[(IX)] \textbf{(Via cones)} $\overline{\text{NE}}(X)-\{0\} \subseteq D_{>0}$. 

\item[(X)] There exists a neighborhood $U$ of $[D]_{num} \in  \text{N}^1(X)_{\mathbb{R}}$ such that \\
$U\backslash \{[D]_{num}\} \subseteq \text{Amp}(X)$.

\end{itemize}

\end{pro}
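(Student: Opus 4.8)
The plan is to establish a cycle of implications connecting Definition \ref{de:air} to each of (I)--(X), exploiting that the $\mathbb{Q}$-divisor case reduces to the $\mathbb{Z}$-divisor case after clearing denominators. The key observation is that if $D \in \text{Div}_{\mathbb{Q}}(X)$ is ample in the sense of Definition \ref{de:air}, then there is a positive integer $N$ such that $ND$ is an integral ample Cartier divisor; moreover $D$ is ample as a $\mathbb{Q}$-divisor if and only if $ND$ is ample as a $\mathbb{Z}$-divisor, since rescaling by a positive rational preserves the property in Definition \ref{de:air}. This lets me import the full equivalence of Proposition \ref{pro:ai} for the integral divisor $ND$ and transfer each statement.

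I would proceed in the following order. First, the numerical statements (VI)--(X): these involve only $D$ itself (via intersection numbers or the numerical class $[D]_{num}$), which scale linearly, so each is literally equivalent to the corresponding statement for $ND$, and hence (by Proposition \ref{pro:ai} applied to $ND$) equivalent to ampleness of $ND$, i.e.\ to Definition \ref{de:air} for $D$. These are essentially immediate and I would dispatch them first. Second, the cohomological and global-generation statements (I), (II), (III): here the point is to pass between $\mathcal{O}_X([mD])$ and $\mathcal{O}_X(mND)$. Writing $m = qN + s$ with $0 \le s < N$, Note \ref{nota:utile} lets me express $[mD] = qND + [sD]$ up to the finitely many ``remainder'' divisors; since there are only finitely many values of $[sD]$, I can use the ampleness of $ND$ together with Proposition \ref{pro:ai} (parts 2 and 3, applied to the finitely many coherent sheaves $\mathscr{F} \otimes \mathcal{O}_X([sD])$) to get the desired vanishing / global generation / very ampleness for all large $m$, exactly as in the proof of the Remark just before the statement. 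Conversely, if (I), (II), or (III) holds, then restricting to the subsequence $m \in N\mathbb{Z}$ recovers the corresponding property of the integral divisor $ND$, which by Proposition \ref{pro:ai} forces $ND$ ample, hence $D$ ample. Third, statement (IV): the direction ``ample $\Rightarrow$ (IV)'' follows because for large $m$ the sheaf $\mathcal{O}_X([mD])$ is very ample on $X$ (by (III) just proved), hence its restriction to any positive-dimensional $V$ has a section vanishing somewhere; for the converse I would argue as in Proposition \ref{pro:ai}(5) $\Rightarrow$ ampleness, using that the hypothesis now holds for \emph{all} $m \ge m_4$ (the strengthening noted in the Remark after Example \ref{es:nna}) so that one can extract the needed numerical positivity on every $V$. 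Statement (V) is handled the same way, comparing $\chi(V, \mathcal{O}_V([mD]))$ along $m \in N\mathbb{Z}$ with $\chi(V, \mathcal{O}_V(mND))$ and invoking Proposition \ref{pro:ai}(6).

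The main obstacle I anticipate is the ``remainder term'' bookkeeping in (I)--(III) and (V): one must be careful that the decomposition $[mD] = qND + (\text{bounded part})$ from Note \ref{nota:utile} produces only finitely many distinct bounded parts as $m$ ranges over all positive integers, and that the integer thresholds $m_i$ obtained for each of these finitely many sheaves can be amalgamated into a single threshold — this is exactly the mechanism already illustrated in the proof of the preceding Remark, so the difficulty is organizational rather than conceptual. A secondary subtlety is statement (IV): since Example \ref{es:nna} shows the naive ``$\exists m$'' version fails, one must verify that the corrected ``$\forall m \ge m_4$'' version genuinely captures ampleness, which requires checking that a section of $\mathcal{O}_V([mD])$ vanishing at a point, available for all large $m$, is enough to run the argument of Proposition \ref{pro:ai}(5) $\Rightarrow$ (1) after passing to $ND$.
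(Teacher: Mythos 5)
Your strategy is essentially the paper's: clear denominators to an integral $kD$, use the decomposition $[mD]=tkD+[iD]$ with $0\le i\le k-1$ to shuttle between $\mathcal{O}_X([mD])$ and twists of the finitely many ``remainder'' sheaves $\mathscr{F}([iD])$, and handle (VI)--(X) by linear scaling in intersection numbers. One small correction: the decomposition you invoke is exactly Lemma~\ref{lem:dr}, not Note~\ref{nota:utile} (the latter is the cruder decomposition used later for $\mathbb{R}$-divisors, where the remainders are only finitely many up to repetition rather than given explicitly by $[iD]$). One caveat on (V): your phrase ``comparing $\chi$ along $m\in N\mathbb{Z}$'' only covers the converse direction; for \emph{Ample} $\Rightarrow$ (V) one must show the limit over \emph{all} $m$, which the paper does by writing $\chi(V,\mathcal{O}_V([mD]))=\chi(V,\mathcal{O}_V([iD])(tH))$ and applying Asymptotic Riemann--Roch, obtaining a degree-$(\dim V)$ polynomial in $t$ with leading coefficient $(H^{\dim V})/(\dim V)!>0$ uniformly in the finitely many $i$; your finitely-many-remainders framework supports this, but the sketch as written does not quite say it. Finally, for (VI)--(X) the paper proves the equivalences once for all $\mathbb{R}$-divisors (Claim~\ref{claim:6-10}, using Campana--Peternell for Nakai--Moishezon), whereas your multiply-to-integral reduction is a simpler $\mathbb{Q}$-specific shortcut that the paper itself notes works for (VI); both are valid here.
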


\begin{proof}
\begin{claim} Either one of (I), (II), (III), (IV) and (V) implies ampleness for $\mathbb{Q}$-divisors.
\end{claim}
\begin{proof} \emph{}

\begin{itemize}
\item[(I) $\Rightarrow$ Ample] Let $a \in \mathbb{N}$ such that $aD \in \text{Div}(X)$ and let $m_0=\lceil \frac{m_1}{a} \rceil \geq 1$; if $m \geq m_0 \Rightarrow am \geq am_0 = a \lceil \frac{m_1}{a} \rceil \geq a\cdot \frac{m_1}{a}=m_1$.\\
Then by hypothesis $$H^i(\mathscr{F}([amD] )=0 \qquad \forall i>0,$$
but $H^i(\mathscr{F}([amD] )=H^i(\mathscr{F}(m (aD) )=0$ so that by Proposition \ref{pro:ampio} $aD$ is an ample integral divisor and so $D=\frac{1}{a}(aD)$ is an ample $\mathbb{Q}$-divisor.

\item[(*) $\Rightarrow$ Ample] The implications $(*) \Rightarrow$ Ample, where $(*)$ is one of $(II), (III), (IV), (V)$, can be proved in a same way.
\end{itemize}
\end{proof}

Now we want to show that if a property holds for every multiple of $mD$ than it is also valid for $D$; we will use this simple fact:

\begin{lem}\label{lem:dr}
Let $D \in \text{Div}_{\mathbb{Q}}(X)$ and let $k \in \mathbb{N}$ such that $kD \in \text{Div}(X)$. Then for every $m\in \mathbb{N}$ there exist $i, t \in \mathbb{N}$ such that $$[mD]= tkD + [iD], \quad 0\leq i\leq k-1.$$ 
\end{lem}

\begin{proof}
$D$ is a finite sum $D= \sum a_j D_j$ where $D_j \in \text{Div}(X)$ are prime divisors and $a_j \in \mathbb{Q}$. Now 
$$[mD]= \sum[ma_j]D_j;$$ and we can always write $m= tk + i$ with $0\leq i \leq k-1$ so that $ma_j= tka_j + ia_j$ where $ka_j$ is an integer and therefore $[ma_j]= tka_j +[ia_j]$.
Hence: 
\begin{align*}
[mD]= \sum_j[ma_j]D_j&= \sum_j (tka_j + [ia_j])D_j = \\
 &=tk\sum_ja_jD_j + \sum_j[ia_j]D_j=tkD + [iD].
\end{align*}
\end{proof}

\begin{claim}
Ample implies either (I), (II), (III) and (IV).
\end{claim}

\begin{proof}
Let us consider $k \in \mathbb{N}$ such that $kD=H$ is an ample integral divisor and let us use the notation of Lemma \ref{lem:dr}.

\begin{itemize}
\item[Ample $\Rightarrow$ (I)] Consider $\mathscr{F}_i= \mathscr{F}([iD])$ for $0\leq i \leq k-1$ and $n_i=n_i(\mathscr{F}_i)$ such that $H^j(\mathscr{F}([iD])\otimes \mathcal{O}_X(n k D)) =0$ for every $j>0$ and every $n \geq n_i$. \\ Then the assertion holds with $m_1= k(\max_i n_i) $.

\item[Ample $\Rightarrow$ (II)]If $H$ is ample, for every coherent sheaf $\mathscr{F}$ there exists an integer $m_0 = m_0(\mathscr{F})$ such that $\mathscr{F}(mH)$ is globally generated for every $m\geq m_0$. Consider $\mathscr{F}_i= \mathscr{F}([iD])$ for $0\leq i \leq k-1$ and $m_i=m_i(\mathscr{F}_i)$ such that $\mathscr{F}_i(mkD)$ is globally generated for every $m \geq m_i$. \\ Then the assertion holds with $m_2= k(\max_i m_i) $.

\item[Ample $\Rightarrow$ (III)] If $H$ is ample, by Proposition \ref{pro:ampio}, for every $i$ with $0\leq i\leq k-1$, there exists an integer $t_i$ such that $tH + [iD]$ is globally generated for every $t\geq t_i$. Also there exists $s \in \mathbb{N}$ such that $tH$ is very ample for every $t \geq s$.\\
Let $r = \max t_i$ and $t \geq s + r$. We get 
$$[mD]= tH + [iD]= (t-r)H +(rH + [iD])$$ that is a  very ample divisor because it is a sum of an ample and a globally generated divisor.
To conclude we get the statement for $m_3 = k(s+r)$.
 
\item[Ample $\Rightarrow$ (IV)] For what we said above \emph{Ample $\Rightarrow$ (III)} and obviously \emph{(III)$\Rightarrow$ (IV)} with $m_4=m_3$.

\item[Ample $\Rightarrow$ (V)] By Asymptotic Riemann-Roch and by Lemma \ref{lem:dr} as $\mathcal{O}_X([iD])$ has rank $1$ we have that:  $$\chi(\mathcal{O}_X([mD])=\chi(\mathcal{O}_X([iD])(tH))= \frac{(H^n)}{n!}t^n + O(t^{n-1}) \stackrel{^{t \to +\infty}}{\longrightarrow} +\infty$$ by Proposition \ref{pro:ampio} because of the ampleness of $H$.
\end{itemize} 
\end{proof}

\begin{claim} \label{claim:6-10} For any $D \in \text{Div}_{\mathbb{R}}(X)$ we have that (VI), (VII), (VIII), (IX) and (X) are equivalent to the definition of ampleness for $\mathbb{R}$-divisors.
\end{claim}

\begin{itemize}

\item[(VI)] The implication \emph{Ample $\Rightarrow$ (VI)} is obvious both for $\mathbb{Q}$ and $\mathbb{R}$-divisors. In fact if $D$ is an ample $\mathbb{R}$-divisor, it is a finite sum in the form $\sum c_i A_i$ where $c_i > 0$ and $A_i$ is an ample and integral divisor so that by Nakai-Moishezon, $(D^{(\dim V)}.V)\geq (\sum c_i)^{(\dim V)} >0$.\\
The other implication is natural for $\mathbb{Q}$-divisors: in fact if we consider $D \in \text{Div}_{\mathbb{Q}}(X)$ such that (VI) holds, we also know that there exists an integer $m >0$ such that $mD$ is an integral divisor and (VI) is also valid for $mD$, so $mD$ is ample and so is $D$.\\
For $\mathbb{R}$-divisors it has been proved by Campana-Peternell \cite{MR1048532} as we will see in the next section.

\item[(VII)] The implication \emph{Ample $\Rightarrow$ (VII)} is obvious. In fact by Definition \ref{de:air} if $D$ is an ample divisor, $D=\sum c_iA_i$ where $A_i$ is an ample integral divisor and $\mathbb{R} \ni c_i \geq 0$. Then by Seshadri's criterion (Proposition \ref{pro:ai}) over $\mathbb{Z}$ for all $i$, there exists $\varepsilon_i>0$ such that $$\frac{(A_i.C)}{\text{mult}_xC}\geq \varepsilon_i$$
then
$$\frac{(D.C)}{\text{mult}_xC} \geq \sum_i(c_i \varepsilon_i)=\varepsilon>0.$$
For the other implication we know that there exists $\varepsilon > 0$ such that $\frac{(D.C)}{\text{mult}_xC}\geq \varepsilon$ for every irreducible curve $C \subseteq X$ passing through $x$. We will proceed by induction over $n= \dim X$. In $n=1$ there is nothing to prove. For every subvariety $V \subseteq X$ such that $0 < \dim V < \dim X$, $D|_V$ is ample by induction so that by Nakai-Moishezon we only need to prove that $(D^n)>0$.\\
To this end, fix any smooth point $x \in X$, and consider the blowing up in this point with exceptional divisor $E$:

$$\begin{matrix}\mu: & X' &\to &X \\
 & \cup & & \cup\\
 & E & \to & x
\end{matrix}$$
we claim that the divisor $\mu^*D-\varepsilon E$ is nef on $X'$. Then by Kleiman's Theorem
$$(D^n)_X -\varepsilon^n=\left( (\mu^*D- \varepsilon E)^n \right)_{X'}\geq 0; $$
therefore $(D^n)>0$ as required.\\
For the nefness of $(\mu^*D - \varepsilon E)$, fix an irreducible curve $C' \subset X'$ not contained in $E$ and set $C=\mu(C')$, so that $C'$ is the proper transform of $C$. Then by Lemma \ref{lem:bo} below $$(C'.E)=\text{mult}_xC.$$
On the other hand, $$(C'.\mu^*D)_{X'}=(C.D)_X$$ by the projection formula. So the hypothesis of the criterion implies that $((\mu^*D-\varepsilon E).C')\geq 0$. Since $\mathcal{O}(E)$ is a negative line bundle on the projective space $E$ the same inequality certainly holds if $C' \subset E$. Therefore $\mu^*- \varepsilon E$ is nef and the proof is complete.

\begin{lem} \label{lem:bo} (\cite{MR2095471} Lemma 5.1.10) Let $V$ be a variety and $x \in V$ a fixed point. Denote by $\mu: V' \to V$ the blowing-up of $V$ at $x$, with exceptional divisor $E \subseteq V'$. Then $$(-1)^{(1 + \dim V)}\cdot (E^{\dim V})=\text{mult}_x V.$$

\end{lem}

\item[(VIII)] The inequality is equivalent to the condition that $D- \varepsilon H$ be nef. If we consider that the inequality holds, then we have that $D= (D - \varepsilon H) + \varepsilon H$ is ample. \\
Conversely, by the openness of the ample cone, if $D$ is ample then $D - \varepsilon H$ is even ample for $0 \leq \varepsilon \ll 1$.

\item[(IX) \& (X)] The proof of this equivalence is the same as the one we used in Proposition \ref{pro:ai} (\emph{Ample $\Rightarrow$ (VIII) $\Rightarrow$ (IX) $\Rightarrow$ (X) $\Rightarrow$ Ample}).

\end{itemize}

\end{proof}

\section{Ampleness for $\mathbb{R}$-divisors}
In this section we will study the case of $\mathbb{R}$-divisors. We will see that in this case we are only able to prove a partial correspondence to Proposition \ref{pro:ai}.

\vskip 0.5cm

We are now beginning a discussion similar to that one made for $\mathbb{Q}$-divisors:

\begin{claim} The affirmations (1) and (4) in Proposition \ref{pro:ai}, when we replace $mD$ with its integral part, are not equivalent to the concept of ampleness for $\mathbb{R}$-divisors.

\begin{proof}
Obviously, the example in the previous section (\ref{es:nna}) is still valid for $\mathbb{R}$-divisors.
\end{proof}
\end{claim}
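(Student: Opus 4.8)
The plan is to reuse the example of \ref{es:nna}, relying only on the inclusion $\text{Div}_{\mathbb{Q}}(X)\subseteq\text{Div}_{\mathbb{R}}(X)$. So on $X_e=\mathbb{P}(\mathcal{O}\oplus\mathcal{O}(-e))$ with $e\geq 2$, where $C_0$ is the section with $C_0^2=-e$ and $f$ a fibre, one considers again
$$D=\tfrac{3}{2}C_0+(e+1)f\;\in\;\text{Div}_{\mathbb{Q}}(X_e)\subseteq\text{Div}_{\mathbb{R}}(X_e).$$
For affirmation (1) nothing has to be added to \ref{es:nna}: $[D]=C_0+(e+1)f$ is very ample by \cite{MR0463157}, so the integral-part form of (1) holds with $m=1$, while $D\cdot C_0=1-\tfrac{e}{2}\leq 0$ shows that $D$ is not nef and hence not ample; so that property is strictly weaker than ampleness for $\mathbb{R}$-divisors.

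For affirmation (4) I would push the same example one step further, examining the integral multiples $[mD]=\bigl[\tfrac{3m}{2}\bigr]C_0+m(e+1)f$. When $m=2\ell$ is even, $[mD]=3\ell\,C_0+2\ell(e+1)f$, so that
$$[mD]\cdot C_0=-3\ell e+2\ell(e+1)=\ell(2-e)\leq 0\qquad(e\geq 2),$$
and therefore $[mD]$ is not nef --- \emph{a fortiori} not very ample --- for every even $m$. In particular no integer $m_3$ makes $\mathcal{O}_X([mD])$ very ample for all $m\geq m_3$, so the integral-part form of (4) fails on this non-ample divisor as well.

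The step I expect to be the main obstacle is getting the logic of (4) exactly right. The Remark preceding Proposition \ref{pro:ar} already shows that an ample $\mathbb{R}$-divisor always satisfies ``$\mathcal{O}_X([mD])$ very ample for all $m\gg 0$'', so the previous paragraph by itself does not separate that property from ampleness; a genuine non-equivalence requires an $\mathbb{R}$-divisor that satisfies it but is not ample. Such a divisor cannot be a $\mathbb{Q}$-divisor, since for $\mathbb{Q}$-divisors the property is equivalent to ampleness by Proposition \ref{pro:ar}(III); it must have irrational coefficients and be nef but not ample, and then for a curve $C$ with $D\cdot C=0$, writing $D=\sum_j a_j P_j$ in prime components, one would need
$$[mD]\cdot C=-\sum_j\{ma_j\}\,(P_j\cdot C)>0\qquad\text{for all }m\gg 0,$$
which conflicts with the equidistribution of the fractional parts $\{ma_j\}$. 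Deciding whether such a $D$ can exist is, to my mind, the heart of the statement for (4).
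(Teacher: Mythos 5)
Your analysis of (1) matches the paper's (non-)argument exactly: the divisor $D=\tfrac{3}{2}C_0+(e+1)f$ on $X_e$ is an $\mathbb{R}$-divisor, $[D]$ is very ample, yet $D\cdot C_0\le 0$ shows $D$ is not ample, so the ``$\exists\, m$'' version fails to characterize ampleness. That part is correct and is all the paper has in mind.

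For (4) you have put your finger on a genuine gap in the paper's one-line proof. The implication ``ample $\Rightarrow$ $\mathcal{O}_X([mD])$ very ample for all $m\gg 0$'' is proved elsewhere in the paper (part (b) of the proposition on properties of ampleness for $\mathbb{R}$-divisors), so the only way (4) can fail to be equivalent to ampleness is via a \emph{non-ample} $\mathbb{R}$-divisor that nevertheless satisfies the integral-part form of (4). The divisor of Example \ref{es:nna} is not such an example: as your computation $[2\ell D]\cdot C_0=\ell(2-e)\le 0$ shows, it already \emph{fails} the integral-part form of (4), so citing that example does nothing to separate (4) from ampleness. Your further observation is also exactly right and is, in effect, a restatement of the paper's own Remark \ref{re:surf}: by Claim \ref{claim:3nef} any $D$ satisfying the integral-part form of (4) is nef; by Proposition \ref{pro:ar}(III) it cannot be a $\mathbb{Q}$-divisor unless it is ample; and on a surface, Weyl equidistribution of the fractional parts rules out the existence of a nef, non-ample $D$ with all $[mD]$, $m\gg 0$, very ample. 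So the paper's cited example does not prove the claim for (4), and the paper itself leaves open (indeed, Remark \ref{re:surf} points in the opposite direction on surfaces) whether the required counterexample exists in higher dimensions. Your skepticism is warranted; the claim as stated is only justified for (1).
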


\begin{claim}\label{claim:3nef} If (III) of Proposition \ref{pro:ar} holds for an $\mathbb{R}$-divisor $D$, then $D$ is nef.

\begin{proof}
Suppose that there exists an irreducible curve $C$ such that $D.C < 0 $. Since in $N^1(X)_{\mathbb{R}}$ we have that $\lim_{n \to \infty} \left[ \frac{[mD]}{m} \right]_{num}=[D]_{num}$ we get $$0 > D.C = \left(\lim_{m\to \infty}\frac{[mD]}{m}\right).C= \lim_{m\to \infty}\frac{[mD].C}{m} \geq 0$$ contradiction.
\end{proof}
\end{claim}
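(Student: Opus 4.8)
The plan is to show that if there is an integer $m_3$ with $\mathcal{O}_X([mD])$ very ample for all $m \geq m_3$, then $D.C \geq 0$ for every irreducible curve $C \subseteq X$, i.e. $D$ is nef. The key input is that the integral parts $[mD]$ approximate $mD$ numerically with bounded error, so dividing by $m$ and letting $m \to \infty$ recovers $D$ in $N^1(X)_{\mathbb{R}}$.

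First I would make precise the limit statement $\lim_{m \to \infty}\left[\tfrac{[mD]}{m}\right]_{num} = [D]_{num}$. Writing $D = \sum a_i D_i$ with $D_i$ prime and $a_i \in \mathbb{R}$, we have $[mD] = \sum [ma_i] D_i$, so $mD - [mD] = \sum \{ma_i\} D_i$, and since each $\{ma_i\} \in [0,1)$ this difference stays in a bounded region of $\text{Div}_{\mathbb{R}}(X)$ (a fixed finite-dimensional set of divisors with coefficients in $[0,1)$). Hence $\tfrac{[mD]}{m} = D - \tfrac{1}{m}\sum\{ma_i\}D_i \to D$ in $N^1(X)_{\mathbb{R}}$, because the correction term has numerical class tending to $0$.

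Next, fix an irreducible curve $C$ and suppose for contradiction that $D.C < 0$. For every $m \geq m_3$ the divisor $[mD]$ is very ample, hence in particular nef, so $[mD].C \geq 0$, giving $\tfrac{[mD].C}{m} \geq 0$ for all $m \geq m_3$. Taking the limit and using the continuity of the intersection pairing $N^1(X)_{\mathbb{R}} \times N_1(X)_{\mathbb{R}} \to \mathbb{R}$ together with the convergence established above, we obtain $D.C = \lim_{m\to\infty}\tfrac{[mD].C}{m} \geq 0$, contradicting $D.C < 0$. Therefore $D.C \geq 0$ for all irreducible curves $C$, which is exactly nefness of $D$.

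I do not expect any serious obstacle here: the only point requiring a little care is justifying that the numerical classes $\left[\tfrac{1}{m}\sum\{ma_i\}D_i\right]_{num}$ go to zero, which follows since the divisors $\sum\{ma_i\}D_i$ lie in a bounded subset of the finite-dimensional space spanned by $D_1,\dots,D_n$ and we are scaling by $\tfrac{1}{m}$. Everything else is the elementary observation that a limit of nonnegative numbers is nonnegative, combined with the very ampleness (hence nefness) of each $[mD]$ for $m \geq m_3$.
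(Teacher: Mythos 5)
Your proof is correct and follows essentially the same approach as the paper: both establish nefness by passing to the limit of $\tfrac{[mD].C}{m}$, using that $[mD]$ very ample implies $[mD].C \geq 0$ and that $\tfrac{[mD]}{m} \to D$ in $N^1(X)_{\mathbb{R}}$ because the fractional parts $\sum\{ma_i\}D_i$ stay in a bounded set. You have merely spelled out the convergence and the nonnegativity of each term more explicitly than the paper's terse version.
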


\begin{re} If (III) of Proposition \ref{pro:ar} holds for an $\mathbb{R}$-divisor $D$ over a surface, then $D$ is ample.
\label{re:surf}

\begin{proof}
We first prove that $D$ is strictly nef, i.e. $D.C>0$ for every irreducible curve $C$.\\ 
By Claim \ref{claim:3nef} $D$ is nef. Suppose that there exists an irreducible curve $C \subset X$ such that $D.C=0$.\\
We even know that for every $m>m_0$, $[mD].C \geq 1$. \\
Also $$0=mD.C = [mD].C + \{mD \}.C \Rightarrow \{mD\}.C = -[mD].C \leq -1.$$ 
If we write $D= \sum a_iD_i, \; a_i\in \mathbb{R}$, $D_i$ prime divisors, then there exists $j$ such that $(D_j.C)<0$ and so $D_j= C$, $(C^2)<0$ and $(D_i.C)\geq 0$ for all $i \ne j$.\\
By Weyl's principle (\cite{MR55555}) for every $a \in \mathbb{R}$ and for every $0< \varepsilon <1$ there exists an integer $k \gg 0$ such that $\{ka\}< \varepsilon$. Choose $\varepsilon = \frac{-1}{C^2}$ and $k>m_0$ to obtain that $\{ka_j\}(D_j.C)= \{ka_j\}(C^2) > -1$ and we get an absurd.

For the ampleness we have that $D^2>0$ because, for $m \geq m_0$, $mD=[mD] +\{mD\}$ is a sum of an ample and an effective divisor that is a big divisor (Proposition \ref{pro:bigir}).
\end{proof}
\end{re}

\begin{pro}[Ampleness for $\mathbb{R}$-divisors]\label{pro:air}
Let $D\in \text{Div}_{\mathbb{R}}(X)$ be a Cartier divisor on a normal projective variety $X$, and let $\mathcal{O}_X([D])$ be the associated line bundle (sometimes we will think $[D]$ as a Weil divisor by the canonical correspondence). The following statements are equivalent to the definition of ampleness for $\mathbb{R}$-divisors:

\begin{enumerate}

\item[i)] Given any coherent sheaf $\mathscr{F}$ on $X$, there exists a positive integer $m_2=m_2(\mathscr{F})$ such that $\mathscr{F}\otimes \mathcal{O}_X([mD])$ it is globally generated $\forall m \geq m_2$;

\item[ii)] \textbf{(Nakai-Moishezon-Kleiman criterion)} $$\int_V c_1(\mathcal{O}_X(D))^{\dim(V)}>0$$ for every positive-dimensional subvariety $V \subseteq X$;

\item[iii)] \textbf{(Seshadri's criterion)} There exists a real number $\varepsilon > 0$ such that $$\frac{(D.C)}{\text{mult}_xC}\geq \varepsilon$$ 

for every point $x \in X$ and every irreducible curve $C \subseteq X$ passing through $x$;

\item[iv)] Let $H$ be an ample divisor. There exists a positive number $\varepsilon >0$ such that $$\frac{(D.C)}{(H.C)}\geq \varepsilon$$ for every irreducible curve $C\subseteq X$;

\item[v)] \textbf{(Via cones)} $\overline{\text{NE}}(X)-\{0\} \subseteq D_{>0}$. 

\item[vi)] There exists a neighborhood $U$ of $[D]_{num} \in  \text{N}^1(X)_{\mathbb{R}}$ such that \\
$U\backslash \{[D]_{num}\} \subseteq \text{Amp}(X)$.

\end{enumerate}
\end{pro}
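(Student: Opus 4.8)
The plan is to mirror the structure of the proof of Proposition \ref{pro:ar}, treating the six items in two groups. Items (ii)--(vi) are exactly items (VI)--(X) of Proposition \ref{pro:ar}, and Claim \ref{claim:6-10} already establishes their equivalence with ampleness \emph{for arbitrary $\mathbb{R}$-divisors} (the only subtle point there, the reverse implication in Nakai--Moishezon, is deferred to the Campana--Peternell theorem \cite{MR1048532}, which I would now invoke and cite explicitly). So the genuinely new content is item (i): the global-generation characterization via integral parts. For this I would prove the two implications separately, exactly as in the $\mathbb{Q}$-divisor case but paying attention to where rationality was secretly used.

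\emph{Ample $\Rightarrow$ (i).} Write $D=\sum c_j A_j$ with $c_j>0$ real and $A_j$ ample Cartier. Using Note \ref{nota:utile}, for every $m$ we have $[mD]=\sum [mc_j]A_j + T$ where $T$ ranges over the \emph{finite} set $\{T_{k_1},\dots,T_{k_s}\}$ of ``remainder'' divisors. Since $A_1$ is ample, for each of the finitely many sheaves $\mathscr{F}_k=\mathscr{F}\otimes\mathcal{O}_X(T_{k})$ there is an integer $r_k$ with $\mathscr{F}_k(rA_1)$ globally generated for all $r\geq r_k$; set $r=\max_k r_k$. Also pick $s$ with $tA_j$ very ample (hence globally generated) for all $t\geq s$ and all $j$. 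Then for $m$ large enough that $[mc_j]\geq r+s$ for every $j$ (possible since $c_j>0$), $[mD]$ tensored with $\mathscr{F}$ is a tensor product of $\mathscr{F}(T_k)(rA_1)$ with several $t_jA_j$'s, $t_j\geq s$, hence globally generated. This gives a uniform $m_2(\mathscr{F})$ and so (i) holds. The key point is that the finiteness of the remainder set $\{T_m\}$ in Note \ref{nota:utile} is what replaces the rational periodicity of Lemma \ref{lem:dr}, which is not available over $\mathbb{R}$.

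\emph{(i) $\Rightarrow$ Ample.} This is the step I expect to be the main obstacle, since the clean trick for $\mathbb{Q}$-divisors (pass to an integral multiple $aD$) is unavailable. I would argue as follows. First, (i) implies that $\mathcal{O}_X([mD])$ is globally generated for all $m\geq m_2(\mathcal{O}_X)$; in particular $[mD]$ is nef for all such $m$, and since $\frac{1}{m}[mD]\to D$ in $N^1(X)_{\mathbb{R}}$ we conclude $D$ is nef (this is the argument already used in Claim \ref{claim:3nef}). It remains to upgrade nef to ample, i.e.\ by the Nakai--Moishezon criterion for $\mathbb{R}$-divisors (item (ii), via Campana--Peternell) to show $\int_V c_1(\mathcal{O}_X(D))^{\dim V}>0$ for every positive-dimensional irreducible $V$. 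Restricting (i) to $V$ (note $\mathcal{O}_V([mD]) = \mathcal{O}_X([mD])|_V$ since taking integral parts commutes with restriction to a subvariety not contained in the support), global generation of $\mathcal{O}_V([mD])$ gives a nonzero section, so $[mD]|_V$ is effective and nef, whence $([mD]|_V^{\dim V})\geq 0$; dividing by $m^{\dim V}$ and letting $m\to\infty$ gives $(D|_V^{\dim V})\geq 0$ — but this only yields $\geq 0$, not $>0$. To get strict positivity I would instead combine asymptotic Riemann--Roch with global generation: for $V$ of dimension $d$, $h^0(V,\mathcal{O}_V([mD]))$ grows, and since $\mathcal{O}_V([mD])$ is globally generated one has $h^0(V,\mathcal{O}_V([mD])) \leq \binom{h^0+d}{d}$-type bounds forcing $(D|_V^d)$ to be the leading coefficient; the honest route is to show that global generation of $\mathcal{O}_V([mD])$ for all large $m$ forces the map $\phi_{[mD]}$ to be generically finite onto its image for $m\gg 0$ (else the Iitaka dimension of $\{[mD]\}$ would be $<d$, contradicting that $\frac{1}{m}[mD]\to D$ nef with, inductively on $\dim X$, $D|_W$ ample for all proper $W\subsetneq V$), and a generically finite morphism given by a globally generated line bundle $\mathcal{O}_V(L)$ has $(L^d)>0$. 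This induction on $\dim X$ — base case $\dim X=1$ trivial, and for the inductive step use that (i) passes to subvarieties — is the technical heart; I would lean on the surface case already worked out in Remark \ref{re:surf} as the prototype and on the structure of the Campana--Peternell argument to close the general case.
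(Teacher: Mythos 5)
Your handling of items (ii)--(vi) (defer to Claim \ref{claim:6-10} and Campana--Peternell) and of the implication Ample $\Rightarrow$ (i) (finiteness of the remainder set $\{T_{k_1},\dots,T_{k_s}\}$ in Note \ref{nota:utile} as the substitute for the rational periodicity of Lemma \ref{lem:dr}) match the paper exactly and are fine.

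The genuine gap is in (i) $\Rightarrow$ Ample, which you correctly flag as the hard direction but do not actually prove. Your proposed route --- induct on $\dim X$ and argue that global generation of $\mathcal{O}_V([mD])$ forces $\phi_{[mD]}$ to be generically finite for $m\gg0$ --- does not go through as stated: a nef, non-big line bundle can perfectly well be globally generated with $\phi$ of positive fiber dimension (a semiample divisor pulled back from a lower-dimensional image is the standard example), and the parenthetical appeal to ``Iitaka dimension $< d$ would contradict $D$ nef with $D|_W$ ample for proper $W$'' is not a contradiction: $[mD]|_W$ need not be trivial on the fibers of $\phi_{[mD]}$, and even if it were, triviality of $[mD]|_W$ does not contradict ampleness of $D|_W$ since $[mD]\neq mD$. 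You acknowledge you would ``lean on the structure of the Campana--Peternell argument to close the general case,'' which is to say the step is not done.

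The paper's actual argument for (i) $\Rightarrow$ Ample avoids induction entirely and is worth internalizing. First, taking $\mathscr{F}=\mathcal{O}_X(-H)$ for $H$ very ample in (i) shows $[mD]$ is very ample for $m\geq m_0$, and $D$ is nef by Claim \ref{claim:3nef}. If $D$ were not ample, by the cone criterion (Claim \ref{claim:6-10}) there would be $0\neq\gamma\in\overline{\text{NE}}(X)$ with $D.\gamma=0$. Writing $D=\sum a_iD_i$ with $D_i$ prime, the fractional parts satisfy the \emph{uniform} bound $|(\{mD\}.\gamma)|\leq\sum|(D_i.\gamma)|$. But $mD.\gamma=0$ forces $(\{mD\}.\gamma)=-( [mD].\gamma)<0$ for $m\geq m_0$, hence some $(D_i.\gamma)<0$. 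Then (i) applied to $\mathscr{F}=\mathcal{O}_X(kD_i)$ makes $[mD]+kD_i$ globally generated (hence nef) for $m$ large, giving
\[
|(\{mD\}.\gamma)| = ([mD].\gamma) = \bigl(([mD]+kD_i).\gamma\bigr) - k(D_i.\gamma) \geq -k(D_i.\gamma),
\]
which can be made arbitrarily large by choosing $k$, contradicting the uniform bound. This single curve class $\gamma$, together with feeding the ``bad'' prime $D_i$ back into hypothesis (i), is the idea your sketch is missing.
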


\begin{proof}
If $D$ is ample then $D=\sum a_i A_i$ where $A_i$ is an ample integral divisor and $a_i>0$, $a_i\in \mathbb{R}$. We can now consider $n \gg0$ such that $nA_i=H_i$ is a very ample integral divisor so that, if $c_i= \frac{a_i}{n}$, $D= \sum c_i H_i$.

\begin{itemize}
\item[Ample $\Rightarrow$ (i)]  With the notation of Note \ref{nota:utile}, for every $j \in \{1, \dots, s\}$ there exists $n_{j}$ such that $\mathscr{F}(T_{k_j})(n H_1)$ is globally generated for every $n \geq n_{j}$. Let $n_i'\geq \frac{n_i}{c_1}$ and consider $$m_1= \max_{i}(n_i'),$$
then for every $m \geq m_1$, $$\mathscr{F}([mD])= \mathscr{F}(T_{k_j} + [m c_1]H_1)\otimes \mathcal{O}_X\left(\sum_{i \geq 2}[mc_i]H_i\right) $$ for  any $j$. But this is a tensor product of a globally generated coherent sheaf and a very ample divisor, hence globally generated.

\item[(i) $\Rightarrow$ Ample] We first prove that $[mD]$ is very ample for all $m\geq m_0$. Let $H$ be a very ample divisor and consider $\mathscr{F}=\mathcal{O}_X(-H)$. Then, by (i), there exists $m_0$ such that for all $m \geq m_0$, $[mD]- H$ is globally generated, so that $$[mD]=([mD] - H) + H$$ is a sum of a very ample and a globally generated integral divisor, that is very ample and we are done. Also by Claim \ref{claim:3nef} $D$ is nef.

In particular, if $D$ it is not ample, by Proposition $\ref{pro:ar}$, there exists $0 \ne \gamma \in \overline{\text{NE}}(X)$ such that $(D.\gamma)=0$. If $D=\sum a_i D_i$ $a_i \in \mathbb{R}$, $D_i$ prime divisor, then $$|(\{mD\}.\gamma)| \leq \sum\{ma_i\}|(D_i.\gamma)|< +\infty.$$
Since for $m \geq m_0$ $[mD]$ is very ample, we have that $([mD].\gamma)>0$, and by assumption we obtain $\{mD\}.\gamma<0$. In particular there exists $D_i$ such that $(D_i.\gamma)<0$. Now, for $m \geq m_0$,  $$|(\{mD\}.\gamma)|=- (\{mD\}.\gamma)=([mD].\gamma),$$
and this would imply that $$\lim_{m\to \infty}|(\{mD\}.\gamma)|=+ \infty$$ that is absurd. 
 Also, if we fix any real number $M$, there exists $k \in \mathbb{N}$ such that $-k(D_i.\gamma)> M$.\\
By (i) there exists $m_1$ such that $kD_i + [mD]$ is globally generated for all $m \geq m_1$.

Let us now consider $m \geq m_0,\; m_1$:
$$|(\{mD\}.\gamma)|=([mD].\gamma)=\left(([mD] +kD_i - kD_i).\gamma\right)>M$$
since $[mD] + kD_i$ is nef, and we are done.
\end{itemize}

To conclude we get the statement by Claim \ref{claim:6-10}.
\end{proof}

\begin{re}
The equivalences (ii)-(vi) with the concept of ampleness where already known, the equivalence of (i) with the concept of ampleness is original.
\end{re}

We are finally able to state the results we could achieve for $\mathbb{R}$-divisors.

\begin{pro}[Properties of ampleness for $\mathbb{R}$-divisors]
Let $D\in \text{Div}_{\mathbb{R}}(X)$ be an ample Cartier divisor on a normal projective variety $X$, and let $\mathcal{O}_X([D])$ be the associated line bundle (sometimes we will think $[D]$ as a Weil divisor by the canonical correspondence). Then:

\begin{enumerate}

\item[a)] Given any coherent sheaf $\mathscr{F}$ on $X$, there exists a positive integer $m_1=m_1(\mathscr{F})$ having the property that $$H^i(X, \mathscr{F}\otimes \mathcal{O}_X([mD]))=0 \quad \forall i >0, \; m \geq m_1;$$


\item[b)] There is a positive integer $m_2$ such that $\mathcal{O}_X([mD])$ is very ample $\forall m \geq m_2$;
 
\item[c)] For every subvariety $V \subseteq X$ of positive dimension, there is a positive integer $m_3=m_3(V)$, such that for every $m \geq m_3$ there exists a non-zero section $0\ne s =s_{V,m} \in H^0(V, \mathcal{O}_V([mD]))$, such that $s$ vanishes at some point of $V$;

\item[d)] For every subvariety $V \subseteq X$ of positive dimension, \\$\chi(V, \mathcal{O}_V([mD]))\to +\infty$ as $m \to +\infty$;

\end{enumerate}
\end{pro}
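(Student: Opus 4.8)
The plan is to reduce everything to the $\mathbb{R}$-divisor version of the Nakai–Moishezon criterion, i.e. to item (ii) of Proposition \ref{pro:air}, which we may assume holds for an ample $\mathbb{R}$-divisor $D$, together with the already-established equivalence between ampleness and item (i) of that proposition (global generation of $\mathscr{F}\otimes\mathcal{O}_X([mD])$ for $m\gg 0$). Note first that, since $D$ is ample, by Definition \ref{de:air} we may write $D=\sum a_iA_i$ with $a_i>0$ real and each $A_i$ an ample integral divisor; fixing $n\gg 0$ so that each $H_i:=nA_i$ is very ample and setting $c_i:=a_i/n$, we get $D=\sum c_iH_i$ with $c_i>0$. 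We will also use Note \ref{nota:utile}: for each $m$, $[mD]=\sum_i[mc_i]H_i+T_{k(m)}$ for one of finitely many integral divisors $T_{k_1},\dots,T_{k_s}$.

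For (b): I would fix a very ample divisor $H$ on $X$ and apply item (i) to the coherent sheaf $\mathscr{F}=\mathcal{O}_X(-H)$, obtaining $m_2$ such that $[mD]-H$ is globally generated for all $m\geq m_2$; then $[mD]=([mD]-H)+H$ is a sum of a globally generated and a very ample divisor, hence very ample. This is precisely the argument already appearing in the proof of ``(i) $\Rightarrow$ Ample'' in Proposition \ref{pro:air}, so it carries over verbatim. For (a): given a coherent sheaf $\mathscr{F}$, I would treat the finitely many sheaves $\mathscr{F}_j:=\mathscr{F}(T_{k_j})$ for $j=1,\dots,s$ separately. Since $H_1$ is ample, Serre vanishing (Proposition \ref{pro:ampio}, item 2) gives $N_j$ with $H^i(X,\mathscr{F}_j\otimes\mathcal{O}_X(\ell H_1))=0$ for all $i>0$ and $\ell\geq N_j$; writing $[mD]=T_{k_j}+[mc_1]H_1+\sum_{i\geq 2}[mc_i]H_i$ and absorbing the very ample summand $\sum_{i\geq 2}[mc_i]H_i$ (for $m$ large, using ampleness of $\sum_{i\ge2}H_i$ and that a globally-generated-times-very-ample twist preserves vanishing — or more simply by invoking item (i) on each $\mathscr{F}_j$ to reduce to Serre vanishing on $X$), one obtains $m_1=\lceil(\max_j N_j+C)/\min_i c_i\rceil$ for a suitable constant $C$ absorbing the other $H_i$. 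The cleanest route is: by item (i) applied to $\mathscr{F}$, $\mathscr{F}\otimes\mathcal{O}_X([mD]-H)$ is globally generated for $m\gg0$, and combining with Serre vanishing for the very ample $H$ twisted by coherent sheaves gives (a); I would spell this out using the $T_{k_j}$ decomposition so the ``$m\gg0$'' is uniform.

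For (b) $\Rightarrow$ (c): this is immediate, as noted already in the $\mathbb{Q}$-divisor case — if $[mD]$ is very ample for $m\geq m_2$ then for any positive-dimensional subvariety $V$ and any $m\geq m_2=:m_3(V)$, the linear system $|[mD]|$ separates points of $V$, so a general section vanishing at some chosen point but not identically on $V$ exists, giving the required $s_{V,m}$. For (d): I would combine Lemma \ref{lem:dr} (choosing $k$ with $kD$ not necessarily integral — here $D$ is an $\mathbb{R}$-divisor, so instead I use the decomposition $[mD]=\sum[mc_i]H_i+T_{k_j}$ of Note \ref{nota:utile}) with Asymptotic Riemann–Roch: $\chi(V,\mathcal{O}_V([mD]))$ is, up to the bounded contribution of the finitely many $T_{k_j}$, a polynomial in $m$ whose leading term is $\frac{(\sum c_iH_i|_V)^{\dim V}}{(\dim V)!}\,m^{\dim V}=\frac{(D|_V)^{\dim V}}{(\dim V)!}\,m^{\dim V}$, which tends to $+\infty$ because $(D|_V)^{\dim V}>0$ by item (ii) of Proposition \ref{pro:air}. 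Since the $T_{k_j}$ range over a finite set, the error is $O(m^{\dim V-1})$ uniformly, so the limit is $+\infty$.

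The main obstacle I anticipate is purely bookkeeping: in the $\mathbb{R}$-divisor setting Lemma \ref{lem:dr} is not directly available (it was stated for $\mathbb{Q}$-divisors, where some multiple is integral), so in both (a) and (d) one must instead lean on the finiteness of the fractional-part divisors $\{T_m\}$ from Note \ref{nota:utile} to make all estimates uniform in $m$. Once that uniformity is in place, (a) is Serre vanishing, (b) is the globally-generated-plus-very-ample trick, (c) is formal, and (d) is Asymptotic Riemann–Roch plus the Nakai–Moishezon inequality for $\mathbb{R}$-divisors — none individually difficult, but the interplay between the integral-part operation and these classical statements is where care is needed.
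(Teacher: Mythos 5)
Your route for parts (b) and (c) matches the paper's exactly, and your part (d) via the multivariable asymptotic Riemann--Roch (Snapper) polynomial combined with Nakai--Moishezon for $\mathbb{R}$-divisors is a cleaner argument than the paper's: the paper instead proves a lower bound on $h^0$ by constructing an increasing chain $[nD]=[mD]+\sum e_iH_i$ along the subsequence where $T_n=T_m$, and then combines with (a); your Riemann--Roch computation avoids having to treat the cyclic family $\{T_{k_j}\}$ subsequence by subsequence. Your observation that Lemma~\ref{lem:dr} is not available over $\mathbb{R}$ and must be replaced by the finiteness of $\{T_{k_1},\dots,T_{k_s}\}$ from Note~\ref{nota:utile} is exactly the bookkeeping the paper does as well.

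The genuine gap is in part (a). You write that one can conclude vanishing of $H^i(X,\mathscr{F}_j\otimes\mathcal{O}_X([mc_1]H_1+\sum_{i\ge 2}[mc_i]H_i))$ by ``absorbing the very ample summand'' using that ``a globally-generated-times-very-ample twist preserves vanishing'' or by ``invoking item (i) ... to reduce to Serre vanishing.'' Neither step is valid as stated: global generation of a sheaf carries no information about its higher cohomology, and twisting by a very ample divisor does not preserve vanishing of higher cohomology in general. The difficulty is precisely that the nef part $\sum_{i\geq 2}[mc_i]H_i$ varies with $m$ and is not a multiple of a fixed ample divisor, so ordinary Serre vanishing for $H_1$ does not apply after the twist. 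The paper handles this by invoking \emph{Fujita's vanishing theorem}: for a coherent $\mathscr{F}$ and ample $H$ there is $m(\mathscr{F},H)$ with $H^i(X,\mathscr{F}(mH+N))=0$ for all $i>0$, $m\geq m(\mathscr{F},H)$, and \emph{all nef} $N$ — this uniformity in the nef twist is exactly what is needed, taking $N=\sum_{i\ge2}[mc_i]H_i$ and $H=H_1$. Without Fujita (or an equivalent Castelnuovo--Mumford regularity argument done uniformly in $N$), your sketch for (a) does not close.
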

\begin{proof}
If $D$ is ample then $D=\sum a_i A_i$ where $A_i$ is an ample integral divisor and $a_i>0$, $a_i\in \mathbb{R}$. We can now consider $n \gg0$ such that $nA_i=H_i$ is a very ample integral divisor so that, if $c_i= \frac{a_i}{n}$, $D= \sum c_i H_i$.
\begin{itemize}

\item[Ample $\Rightarrow$ (a)] In  analogous way to Ample $\Rightarrow$ (i) of  Proposition \ref{pro:air} and by Fujita's Theorem, there exists an integer $m_2$ such that  $$H^i(X,\mathscr{F}([mD]))=  H^i \left(X,\mathscr{F}(T_{k_j} + [m c_1]H_1)\otimes \mathcal{O}_X\left(\sum_{s \geq 2}[mc_s]H_s\right)\right) =0$$ for all $m\geq m_2$ and all $i \geq 1$.

\item[Ample $\Rightarrow$ (b)] In  analogous way to Ample $\Rightarrow$ (i) of  Proposition \ref{pro:air} with $\mathscr{F}= \mathcal{O}_X$.

\item[Ample $\Rightarrow$ (c)] Passing through the property (b) we get the statement. In fact $(c)$ obviously holds for any very ample divisor.

\item[Ample $\Rightarrow$ (d)] Let us consider two ample integral effective divisors $A, B \subseteq V$ and the canonical exact sequence of $A$:
$$0 \to \mathcal{O}_V(B) \to  \mathcal{O}_V(A+B) \to \mathcal{O}_A( A + B) \to 0 $$
so that we obtain the cohomological long exact sequence: 
$$0 \to H^0(\mathcal{O}_V(B)) \stackrel{f}{\rightarrow}  H^0(\mathcal{O}_V(A+B)) \stackrel{h}{\rightarrow} H^0(\mathcal{O}_A( A + B))\to \cdots$$  
By the ampleness of $A$ and $B$, we get that $f$ cannot be surjective, so that $h$ is not the zero map, whence
$$h^0(\mathcal{O}_V( A + B))- h^0(\mathcal{O}_V(B)) \geq 1.$$

By \emph{ample $\Rightarrow$ (b)} we know that there exists a positive integer $m_0$, such that $[mD]$ is very ample for all $m \geq m_0$.
Also, by Note \ref{nota:utile},  $[mD]= \sum [m c_i]H_i + T_m$.\\
Let $n> m>m_0$, such that $T_m= T_n$; then we obtain a new divisor in the form $[nD]= \sum [n c_i]H_i + T_{n}$, that is $[nD]= [mD] + \sum e_i H_i$, $e_i \in \mathbb{N}$ where, if $e_i=0\; \forall i$, we take $n' >n$. Let us consider an increasing sequence of those $n$.\\
Since $[mD]$ and $ \sum e_i H_i$ are very ample, we get that $$h^0(\mathcal{O}_V([nD]|_V)) \stackrel{n \to \infty}{\longrightarrow} + \infty.$$

To conclude, since \emph{ample $\Rightarrow$ (a)} we get $h^i(V, \mathcal{O}_V([mD]|_V))=0$ for all $i \geq 1$ and for all $m\gg0$, whence: $$\chi(\mathcal{O}_V([nD])) \stackrel{n \to \infty}{\longrightarrow} + \infty.$$

\end{itemize}
\end{proof}

\section{Big line bundles and divisors}

\begin{de}[Big] \label{de:big}
A line bundle $\mathscr{L}$ on a projective variety $X$ is \emph{big} if $\kappa(X, \mathscr{L})=\dim X$. A Cartier divisor $D$ on $X$ is \emph{big} if $\mathcal{O}_X(D)$ is so.
\end{de}

Also reacall that
\begin{de}
The semigroup of divisor $D$ on a projective variety $X$ is defined as $$\mathbf{N}(X, D) = \{m \in \mathbb{N}\,| \, H^0(X, \mathcal{O}_X(mD)) \ne 0\}.$$
\end{de}

\begin{lem}\label{lem:b1}
Assume that $X$ is a projective variety of dimension $n$. A divisor $D$ on $X$ is big if and only if there is a constant $C>0$ such that $$h^0(X, \mathcal{O}_X(mD))\geq C \cdot m^n$$ for all sufficiently large $m \in \mathbf{N}(X, D)$.
\end{lem}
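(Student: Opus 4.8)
The plan is to prove the two implications separately, relating the Iitaka dimension $\kappa(X,D)$ to the growth rate of $h^0(X,\mathcal{O}_X(mD))$ along the semigroup $\mathbf{N}(X,D)$. For the easy direction, suppose such a constant $C>0$ exists. Then in particular $h^0(X,\mathcal{O}_X(mD))\geq 1$ for all large $m\in\mathbf{N}(X,D)$, and more to the point the dimension grows like $m^n$. Since by definition $\kappa(X,D)$ is the largest integer $k$ such that $h^0(X,\mathcal{O}_X(mD))/m^k$ is bounded away from $0$ and $\infty$ along $\mathbf{N}(X,D)$ (equivalently, $h^0$ grows on the order of $m^{\kappa}$), and since always $\kappa(X,D)\leq \dim X=n$, the lower bound $h^0(X,\mathcal{O}_X(mD))\geq C\cdot m^n$ forces $\kappa(X,D)=n$, i.e.\ $D$ is big.

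For the converse, assume $D$ is big, so $\kappa(X,D)=n$. First I would reduce to the case where $\mathbf{N}(X,D)$ is generated by a single element: choose $p\in\mathbf{N}(X,D)$ with $h^0(X,\mathcal{O}_X(pD))\geq 2$ and such that the rational map $\phi=\phi_{|pD|}$ defined by $|pD|$ is birational onto its image (such $p$ exists by the definition of Iitaka dimension being maximal). Replacing $D$ by $pD$ only changes the constant $C$ by a factor depending on $p$, so it suffices to find $C'>0$ with $h^0(X,\mathcal{O}_X(mpD))\geq C'\cdot(mp)^n$ for all large $m$; then for general $q\in\mathbf{N}(X,D)$ one writes $q$ large in the form $mp + r$ with $0\leq r<p$ and uses $h^0(X,\mathcal{O}_X(qD))\geq h^0(X,\mathcal{O}_X(mpD))$ together with inclusion of linear series (multiplying a section of $mpD$ by a fixed nonzero section of $rD$, which exists once we also know $r\in\mathbf{N}(X,D)$ — handled by passing to the subsemigroup, or by noting $\mathbf{N}(X,D)$ contains all sufficiently large multiples of some fixed integer).

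The heart of the matter is thus: if $H:=pD$ has $\phi_{|H|}$ birational onto its image $Y\subseteq\mathbb{P}^N$, then $h^0(X,\mathcal{O}_X(mH))\geq C'm^n$ for $m\gg0$. Here I would resolve the indeterminacy of $\phi_{|H|}$ by a birational morphism $\mu\colon X'\to X$ with $X'$ projective and $\mu^*H = M + F$ where $M$ is base-point-free, $F$ effective, and the morphism $\psi\colon X'\to Y$ induced by $|M|$ is birational onto $Y$; then $H^0(X,\mathcal{O}_X(mH)) = H^0(X',\mathcal{O}_{X'}(m\mu^*H))\supseteq H^0(X',\mathcal{O}_{X'}(mM))$, and $h^0(X',\mathcal{O}_{X'}(mM))\geq h^0(Y,\mathcal{O}_Y(mM_Y))$ where $M_Y$ is the very ample line bundle on $Y$ with $\psi^*M_Y = M$. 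Since $\dim Y = \dim X = n$ (birationality of $\phi$) and $M_Y$ is ample on the $n$-dimensional projective variety $Y$, the asymptotic Riemann--Roch theorem gives $h^0(Y,\mathcal{O}_Y(mM_Y)) = \tfrac{(M_Y^n)}{n!}m^n + O(m^{n-1})$ with $(M_Y^n)>0$, which yields the desired constant $C'$.

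The main obstacle is the bookkeeping in this last step — constructing the resolution $\mu$, checking that $\dim Y=n$ really follows from birationality of $\phi_{|pD|}$, and ensuring the asymptotic lower bound on $h^0(Y,\mathcal{O}_Y(mM_Y))$ survives pulling back through $\psi$ and $\mu$ (one only needs an inequality $h^0(X',mM)\geq h^0(Y,mM_Y)$, which holds since $\psi_*\mathcal{O}_{X'}\supseteq\mathcal{O}_Y$ gives an injection on global sections). None of these are deep, but assembling them cleanly, together with the reduction from an arbitrary $q\in\mathbf{N}(X,D)$ to multiples of $p$, is where the care is needed.
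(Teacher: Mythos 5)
The paper itself gives no proof of this lemma: it is stated bare and implicitly cited from Lazarsfeld's book, where it is a direct consequence of the growth-rate characterization of Iitaka dimension (namely that $h^0(X,\mathcal{O}_X(mD))$ grows like $m^{\kappa(X,D)}$ along $\mathbf{N}(X,D)$). Your plan is essentially the standard argument from that source specialized to $\kappa=n$, and it is correct in substance. Three points deserve care in a full write-up. First, in the easy direction you invoke, in effect as the definition, the full two-sided estimate $a\,m^{\kappa}\leq h^0(mD)\leq b\,m^{\kappa}$; since your hard direction amounts to reproving the lower half of that estimate when $\kappa=n$, to avoid an appearance of circularity you should note that the easy direction really only needs the elementary upper bound $h^0(X,\mathcal{O}_X(mD))=O(m^n)$ together with $\kappa(X,D)\leq n$, or else cite the characterization outright for both directions. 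Second, the reduction from a general $q\in\mathbf{N}(X,D)$ to multiples of $p$ is slightly more delicate than you indicate: the remainder $r=q-mp$ is a multiple of the exponent of the semigroup but need not itself lie in $\mathbf{N}(X,D)$; the fix (which you gesture at) is to replace $r$ by $r+m_0p$ for a fixed large $m_0$ so that each of the finitely many candidate remainders is in $\mathbf{N}(X,D)$, then absorb the shift into the constant. Third, the identification $H^0(X,\mathcal{O}_X(mH))=H^0(X',\mathcal{O}_{X'}(m\mu^*H))$ uses $\mu_*\mathcal{O}_{X'}=\mathcal{O}_X$, which needs $X$ normal; the lemma as stated says only ``projective variety,'' so you should either add normality (as Corollary~\ref{co:big} does) or pass to the normalization at the outset. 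None of these is a fatal gap, and the core mechanism — resolving the indeterminacy of $\phi_{|pD|}$, projecting to the $n$-dimensional image $Y$, and applying asymptotic Riemann--Roch with Serre vanishing to the very ample $M_Y$ — is exactly right.
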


\begin{pro}[Kodaira's lemma] Let $D$ be a big Cartier divisor and $F$ an arbitrary effective Cartier divisor on $X$. Then $$H^0(X, \mathcal{O}_X(mD- F)) \ne 0$$
for all sufficiently large $m \in \mathbf{N}(X, D)$.
\end{pro}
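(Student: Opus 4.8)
The plan is to deduce Kodaira's lemma directly from Lemma \ref{lem:b1}, which reduces bigness to a polynomial lower bound on $h^0(X,\mathcal{O}_X(mD))$. First I would fix the effective Cartier divisor $F$ and write down the standard short exact sequence of sheaves
\[
0 \longrightarrow \mathcal{O}_X(mD-F) \longrightarrow \mathcal{O}_X(mD) \longrightarrow \mathcal{O}_F(mD) \longrightarrow 0,
\]
obtained by twisting the structure sequence $0\to\mathcal{O}_X(-F)\to\mathcal{O}_X\to\mathcal{O}_F\to 0$ by $\mathcal{O}_X(mD)$. Taking cohomology gives the left-exact sequence
\[
0 \longrightarrow H^0(X,\mathcal{O}_X(mD-F)) \longrightarrow H^0(X,\mathcal{O}_X(mD)) \longrightarrow H^0(F,\mathcal{O}_F(mD)),
\]
so it suffices to show that for large $m\in\mathbf{N}(X,D)$ the dimension $h^0(X,\mathcal{O}_X(mD))$ strictly exceeds $h^0(F,\mathcal{O}_F(mD))$; then the first map is nonzero, which is exactly the assertion.

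The key point is a dimension count. Since $D$ is big, Lemma \ref{lem:b1} supplies a constant $C>0$ with $h^0(X,\mathcal{O}_X(mD))\ge C\cdot m^n$ for all sufficiently large $m\in\mathbf{N}(X,D)$, where $n=\dim X$. On the other hand $F$, being an effective Cartier divisor, is a closed subscheme of dimension at most $n-1$, so the coherent sheaf $\mathcal{O}_F(mD)$ is supported in dimension $\le n-1$, and its space of global sections grows at most polynomially of degree $n-1$: there is a constant $C'>0$ with $h^0(F,\mathcal{O}_F(mD))\le C'\cdot m^{n-1}$ for all $m\ge 1$. (This growth bound is the elementary fact that for a coherent sheaf on a projective scheme of dimension $d$, twisted by multiples of a fixed divisor, $h^0$ is $O(m^d)$; one can see it by restricting to the components of $F$ and using asymptotic Riemann--Roch or a Noetherian induction on the support, exactly the circle of ideas already invoked for statement (d) above.) Comparing the two estimates, $C\cdot m^n > C'\cdot m^{n-1}$ for all $m$ large, so for all sufficiently large $m\in\mathbf{N}(X,D)$ we get $h^0(X,\mathcal{O}_X(mD)) > h^0(F,\mathcal{O}_F(mD))$, forcing $H^0(X,\mathcal{O}_X(mD-F))\ne 0$.

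The main obstacle is making the upper bound $h^0(F,\mathcal{O}_F(mD))=O(m^{n-1})$ precise and honest: one must be a little careful because $F$ need not be reduced or irreducible and $\mathcal{O}_X(mD)|_F$ need not be ample (indeed $D$ need not be ample). The clean way is to filter $\mathcal{O}_F$ by powers of a defining ideal so that the graded pieces are sheaves on the reduced components $F_{\mathrm{red}}$, reduce to the case of an integral projective scheme $V$ of dimension $\le n-1$, and there bound $h^0(V,\mathcal{O}_X(mD)|_V)$ by $h^0(V,\mathcal{O}_V(mA))$ for a fixed ample $A$ with $A\pm (D|_V)$ effective (or simply dominate $D|_V$ numerically), where asymptotic Riemann--Roch gives growth $O(m^{\dim V})\le O(m^{n-1})$. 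Once this polynomial-degree bookkeeping is in place, the rest of the argument is the short exact sequence and the numerical comparison above.
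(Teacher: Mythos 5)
Your argument is the same as the paper's: twist the ideal-sheaf sequence of $F$ by $\mathcal{O}_X(mD)$, use Lemma \ref{lem:b1} for the $\Omega(m^n)$ lower bound on $h^0(X,\mathcal{O}_X(mD))$, and compare against the $O(m^{n-1})$ upper bound on $h^0(F,\mathcal{O}_F(mD))$ coming from $\dim F \le n-1$. The only difference is that you flag and sketch a justification for the $O(m^{n-1})$ bound, which the paper simply asserts; that is a welcome clarification rather than a different route.
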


\begin{proof}
Suppose that $\dim X= n$ and consider the exact sequence of $F$ $$0\to \mathcal{O}_X(-F) \to \mathcal{O}_X \to \mathcal{O}_F \to 0$$
tensored by $\mathcal{O}_X(mD)$:
$$0\to \mathcal{O}_X(mD-F) \to \mathcal{O}_X(mD) \to \mathcal{O}_F(mD) \to 0.$$
Since $D$ is big, by the Lemma \ref{lem:b1} there is a constant $C>0$ such that $h^0(X, \mathcal{O}_X(mD))\geq c\cdot m^n$ for sufficiently large $m \in \mathbf{N}(X, D)$. On the other hand $\dim F= n-1$ so that $h^0(F,\mathcal{O}_F(mD))$ grows at most like $m^{n-1}$. Therefore $$h^0(X, \mathcal{O}_X(mD))> h^0(F, \mathcal{O}_F(mD)$$ for large $m\in \mathbf{N}(X, D)$ and the assertion follows by the exact sequence.
\end{proof}

\begin{co}[Characterization of big integral divisors]\label{co:big} Let $D$ be a Cartier divisor on a normal variety $X$. Then the following are equivalent:
\begin{enumerate}

\item $D$ is big;

\item there exists an integer $a \in \mathbb{N}$ such that $\varphi_{|mD|}$ is birational for all $m \in \mathbf{N}(X, D)_{\geq a}$;
 
\item $\varphi_{|mD|}$ is generically finite for some $m \in \mathbf{N}(X, D)$;

\item for any coherent sheaf $\mathscr{F}$ on $X$, there exists a positive integer $m=m(\mathscr{F})$ such that $\mathscr{F}\otimes \mathcal{O}_X(mD)$ is generically globally generated, i.e. is such that the natural map $$H^0(X, \mathscr{F} \otimes\mathcal{O}_X(mD)) \otimes_{\mathbb{C}}\mathcal{O}_X \to \mathscr{F} \otimes \mathcal{O}_X(mD)$$ is surjective over a dense open subset;

\item for any ample integral divisor $A$ on $X$, there exists a positive integer $m>0$, and an effective divisor $N$ such that $mD\equiv_{lin}A +N$;

\item same as in (5) for some integral ample divisor $A$;

\item there exists an ample integral divisor $A$, a positive integer $m >0$ and an effective divisor $N$ such that $mD\equiv_{num} A+N$.

\end{enumerate}

\end{co}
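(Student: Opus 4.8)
The strategy is to prove all seven conditions equivalent through (1), organising the implications into the cycle $(5)\Rightarrow(6)\Rightarrow(7)\Rightarrow(1)\Rightarrow(5)$ together with the side chains $(1)\Rightarrow(4)\Rightarrow(6)$ and $(1)\Rightarrow(2)\Rightarrow(3)\Rightarrow(1)$. The implications $(5)\Rightarrow(6)$ and $(6)\Rightarrow(7)$ are immediate: in $(5)\Rightarrow(6)$ one simply specialises ``for any ample $A$'' to a fixed very ample divisor, and linear equivalence refines numerical equivalence. For $(7)\Rightarrow(1)$, given $mD\equiv_{num}A+N$ with $A$ ample and $N$ effective, set $P:=mD-N$, so $P\equiv_{num}A$; since the Nakai--Moishezon--Kleiman criterion (Proposition \ref{pro:ai}(7)) depends only on numerical classes, $\mathcal{O}_X(P)$ is an ample line bundle, hence by asymptotic Riemann--Roch $h^0(X,\mathcal{O}_X(jP))\geq C j^n$ for $j\gg 0$; as $jN$ is effective we get $\mathcal{O}_X(jP)\hookrightarrow\mathcal{O}_X(jmD)$ and therefore $h^0(X,\mathcal{O}_X(jmD))\geq C'(jm)^n$, so $D$ is big by Lemma \ref{lem:b1}.

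For $(1)\Rightarrow(5)$, fix any ample integral $A$ and any very ample $H$; then $H_1:=A+kH$ is very ample for $k\gg 0$, so $A=H_1-kH$ with both $H_1$ and $kH$ very ample, in particular linearly equivalent to effective divisors $F_1$ and $F_2$. Applying Kodaira's lemma to the big divisor $D$ and the effective divisor $F_1$ produces $m$ with $mD-F_1\equiv_{lin}N_1$ effective, whence $mD\equiv_{lin}H_1+N_1\equiv_{lin}A+(F_2+N_1)$, and $N:=F_2+N_1$ is effective. For $(1)\Rightarrow(4)$, use $(5)$ to write $pD\equiv_{lin}A+N$ with $A$ integral ample and $N$ effective; by Serre's theorem $\mathscr{F}\otimes\mathcal{O}_X(qA)$ is globally generated for $q\gg 0$, so for $m:=qp$ we have $\mathscr{F}\otimes\mathcal{O}_X(mD)\cong\big(\mathscr{F}\otimes\mathcal{O}_X(qA)\big)\otimes\mathcal{O}_X(qN)$, which is globally generated on the complement of the support of $N$ (there the canonical section of $\mathcal{O}_X(qN)$ trivialises the second factor, and the sections $t\otimes s_N^{\otimes q}$ do the generating), i.e.\ generically globally generated. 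Finally $(4)\Rightarrow(6)$: apply $(4)$ to $\mathscr{F}=\mathcal{O}_X(-A)$ with $A$ integral ample to see that $\mathcal{O}_X(mD-A)$ is generically globally generated, hence has a nonzero section, i.e.\ $mD\equiv_{lin}A+N$ with $N$ effective.

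For the last chain, $(2)\Rightarrow(3)$ is trivial, and $(3)\Rightarrow(1)$ follows because a generically finite $\varphi_{|mD|}$ forces its image $Y$ to have dimension $n=\dim X$, so pulling back powers of a very ample bundle on $Y$ gives $h^0(X,\mathcal{O}_X(kmD))\geq h^0(Y,\mathcal{O}_Y(k))\geq C k^n$, and $D$ is big by Lemma \ref{lem:b1}. For $(1)\Rightarrow(2)$ I would apply Kodaira's lemma with an effective $F\in|A|$, $A$ very ample, to obtain some $m_1\in\mathbf{N}(X,D)$ with $m_1D\equiv_{lin}A+N_1$, $N_1$ effective; then for every $m\in\mathbf{N}(X,D)$ with $m-m_1\in\mathbf{N}(X,D)$ we get $mD\equiv_{lin}A+N$ with $N$ effective, so $|mD|\supseteq|A|+N$, and since $A$ is very ample the induced morphism $\varphi_{|mD|}$ is injective with injective differential on the dense open complement of the support of $N$, hence birational onto its image. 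To see this holds for all large $m\in\mathbf{N}(X,D)$, recall that $\mathbf{N}(X,D)$ is a numerical semigroup, so $\tfrac1d\mathbf{N}(X,D)$ (with $d$ its greatest common divisor) contains all sufficiently large integers; hence $m-m_1\in\mathbf{N}(X,D)$ once $m$ is large in $\mathbf{N}(X,D)$.

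I expect the main obstacle to be $(1)\Rightarrow(2)$: producing birationality of $\varphi_{|mD|}$ uniformly for \emph{all} sufficiently large $m\in\mathbf{N}(X,D)$, which forces both the numerical-semigroup bookkeeping above and the verification that $\varphi_{|mD|}$ is genuinely birational --- not merely generically finite --- once $|mD|$ dominates $|A|+(\text{effective})$ with $A$ very ample. The remaining implications reduce quite mechanically to Kodaira's lemma, Serre vanishing, the numerical nature of ampleness, and Lemma \ref{lem:b1}.
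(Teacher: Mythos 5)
The paper states Corollary~\ref{co:big} without proof: it is standard material (cf.\ Lazarsfeld, Positivity I, \S 2.2), and the paper supplies only the two ingredients, Lemma~\ref{lem:b1} and Kodaira's lemma, from which it is meant to follow. Your reconstruction is correct and follows exactly that intended route. A few remarks on the places where your sketch is compressed. In $(1)\Rightarrow(5)$ the non-obvious point is that one needs $mD\equiv_{lin}A+N$ with the \emph{given} ample $A$ appearing with coefficient one; your trick of writing $A=(A+kH)-kH$ with both summands very ample and then feeding $A+kH$ into Kodaira's lemma is the right fix and is exactly how this is handled in the standard treatment. In $(3)\Rightarrow(1)$, the inequality $h^0(X,\mathcal{O}_X(kmD))\geq h^0(Y,\mathcal{O}_Y(k))$ hides a base-locus issue: one should first pass to a resolution $\mu:X'\to X$ of the base locus of $|mD|$, write $\mu^*(mD)=M+F$ with $|M|$ free and $F$ effective, and then use $h^0(X,kmD)=h^0(X',kM+kF)\geq h^0(X',kM)\geq h^0(Y,\mathcal{O}_Y(k))$; the conclusion is the same. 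In $(1)\Rightarrow(2)$ you correctly isolate the delicate point, and the numerical-semigroup bookkeeping is exactly what is needed: once $m_1D\equiv_{lin}A+N_1$ with $A$ very ample, and $m-m_1\in\mathbf{N}(X,D)$, the subsystem $|A|+N\subseteq|mD|$ shows $\varphi_{|mD|}$ refines the embedding $\varphi_{|A|}$ away from $\mathrm{Supp}(N)$, hence is birational onto its image; and since every sufficiently large $m\in\mathbf{N}(X,D)$ satisfies $m-m_1\in\mathbf{N}(X,D)$ (both being multiples of the common gcd $d$), the uniformity over $\mathbf{N}(X,D)_{\geq a}$ follows. The remaining implications $(5)\Rightarrow(6)\Rightarrow(7)$, $(7)\Rightarrow(1)$, $(1)\Rightarrow(4)\Rightarrow(6)$, $(2)\Rightarrow(3)$ are all correctly handled. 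No genuine gap.
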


\begin{de}[Big $\mathbb{Q}$-divisors] \label{de:bigr}
A $\mathbb{Q}$-divisor $D$ is big if there is a positive integer $m>0$ such that $mD$ is integral and big.
\end{de}

\begin{de}[Big $\mathbb{R}$-divisors]\label{de:bigir}
An $\mathbb{R}$-divisor $D \in \text{Div}_{\mathbb{R}}(X)$ is big if it can written in the form $$D= \sum a_i \cdot D_i$$ where each $D_i$ is a big integral divisor and $a_i$ is a positive real number.  
\end{de}

\begin{pro}\label{pro:neb} Let $D$ and $D'$ be $\mathbb{R}$-divisors on $X$. If $D \equiv_{num} D'$, then $D$ is big if and only if $D'$ is big.
\end{pro}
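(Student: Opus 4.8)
The plan is to characterise big $\mathbb{R}$-divisors by the numerically invariant condition ``ample $\mathbb{R}$-divisor plus effective $\mathbb{R}$-divisor'' and then to read the proposition off immediately. Precisely, I would first prove the auxiliary fact: \emph{an $\mathbb{R}$-divisor $B$ is big in the sense of Definition \ref{de:bigir} if and only if $B\equiv_{num}A+E$ for some ample $\mathbb{R}$-divisor $A$ and some effective $\mathbb{R}$-divisor $E$}. Granting this, the proposition follows at once: since $D\equiv_{num}D'$, we have $D\equiv_{num}A+E$ for some such $A,E$ exactly when $D'\equiv_{num}A+E$ for some such $A,E$, so $D$ is big if and only if $D'$ is big.

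For the forward implication of the auxiliary fact I would use Corollary \ref{co:big}. Writing $B=\sum_i a_iB_i$ with $a_i>0$ and each $B_i$ big integral, fix one ample integral divisor $A_0$; by Corollary \ref{co:big}(5) there are integers $m_i>0$ and effective integral divisors $N_i$ with $m_iB_i\equiv_{lin}A_0+N_i$, hence $B\equiv_{num}\big(\sum_i\tfrac{a_i}{m_i}\big)A_0+\sum_i\tfrac{a_i}{m_i}N_i$, an ample $\mathbb{R}$-divisor plus an effective $\mathbb{R}$-divisor. Conversely, if $B\equiv_{num}A+E$ then $B-E\equiv_{num}A$, and since ampleness of $\mathbb{R}$-divisors depends only on the numerical class (Proposition \ref{pro:air}, criteria (ii), (iv), (v), (vi)), the $\mathbb{R}$-divisor $B-E$ is ample; so it suffices to show that an ample $\mathbb{R}$-divisor plus an effective $\mathbb{R}$-divisor is big.

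To that end, let $A$ be ample and $E$ effective; write $A=\sum_i c_iA_i$ with $c_i>0$, $A_i$ ample integral, and $E=\sum_{j=1}^{s}f_jE_j$ with $f_j>0$ and $E_j$ prime (if $s=0$ then $A+E=A$ is a positive combination of the $A_i$, which are ample hence big by Corollary \ref{co:big}, and we are done by Definition \ref{de:bigir}). Fix an ample integral divisor $H$. By openness of the ample cone (cf.\ Proposition \ref{pro:air}(vi)) pick a rational $\alpha>0$ small enough that $A-s\alpha H$ is still ample. Then
$$A+E=(A-s\alpha H)+\sum_{j=1}^{s}(\alpha H+f_jE_j),$$
where $A-s\alpha H$ is a positive combination of ample, hence big, integral divisors. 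For each $j$ set $g_j=f_j/\alpha$, $n_j=\lfloor g_j\rfloor\ge 0$ and $t_j=g_j-n_j\in[0,1)$; from $g_j=(1-t_j)n_j+t_j(n_j+1)$ we obtain
$$\alpha H+f_jE_j=\alpha(1-t_j)(H+n_jE_j)+\alpha t_j\big(H+(n_j+1)E_j\big),$$
a positive combination of the divisors $H+n_jE_j$ and $H+(n_j+1)E_j$ (when $t_j=0$ the term is just $\alpha(H+n_jE_j)$); each of these is an ample integral divisor plus an effective integral divisor, hence big integral by Corollary \ref{co:big}(6). Collecting terms presents $A+E$ as a positive real combination of big integral divisors, so $A+E$ is big.

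The main obstacle is precisely this last step. Definition \ref{de:bigir} only allows combinations of \emph{integral} big divisors, so the real coefficients $f_j$ cannot be left sitting on the prime components $E_j$, which need not themselves be big; they must be transferred onto genuinely big integral divisors. Peeling off a small multiple $s\alpha H$ of an ample integral divisor and then interpolating with the integer-coefficient divisors $H+n_jE_j$ and $H+(n_j+1)E_j$ is exactly what makes this redistribution possible. The remaining ingredients --- Corollary \ref{co:big} for the forward implication and the numerical invariance of ampleness from Proposition \ref{pro:air} --- are routine consequences of the earlier results.
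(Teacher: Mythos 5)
The paper states Proposition \ref{pro:neb} without proof; it is simply imported as a known fact (cf.\ \cite{MR2095471}, Prop.\ 2.2.42), so there is no in-text argument to compare your proposal against. Your proof is nevertheless correct and fully self-contained within the logical order of the paper: you reduce bigness to the numerically invariant description $B\equiv_{num}A+E$ with $A$ ample and $E$ effective (essentially items (v)--(vi) of the later Proposition \ref{pro:bigir}), but you derive it here using only Corollary \ref{co:big}, Definition \ref{de:bigir}, and the numerical characterization of ampleness from Proposition \ref{pro:air}, all of which precede \ref{pro:neb} --- so there is no circularity with the later propositions that quote \ref{pro:neb} in their own proofs. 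The key technical point, as you correctly identify, is showing that ``ample plus effective'' is big when the coefficients are real: your device of peeling off $s\alpha H$ from $A$ and then interpolating $\alpha H+f_jE_j$ between the big integral divisors $H+n_jE_j$ and $H+(n_j+1)E_j$ is in spirit a slightly more general form of the convex-interpolation trick the paper records as Remark \ref{re:bqr}, and you handle the degenerate cases ($s=0$, $n_j=0$, $t_j=0$) correctly. One minor remark: insisting that $\alpha$ be rational is unnecessary, since the final decomposition of $A+E$ only needs positive real coefficients on big integral divisors, which is all Definition \ref{de:bigir} requires.
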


\begin{re}If $D$ is an integral divisor, $D$ is big in the sense of $\mathbb{Z}$-divisors (\ref{co:big}) if and only if it is big in the sense of $\mathbb{R}$-divisors (\ref{de:bigir}).
\end{re}

\begin{proof}If $D$ is big in the sense of $\mathbb{Z}$-divisors, obviously $D$ can be written as $1 \cdot D$, so that it is a big $\mathbb{R}$-divisor.\\
If $D=\sum c_i B_i $, where $c_i \in \mathbb{R}$, $c_i>0$ and $B_i$ is a big integral divisor, as we will see in the Claim \ref{claim:boh}, there exists $m_0$ such that $[m_0D]$ is an integral big divisor. But in this case we have that $[m_0D]=m_0D$ and we get the statement by Corollary \ref{co:big}. 

\end{proof}

As we have just done for the ampleness, we would extend the various properties of bigness, when it is possible, to $\mathbb{Q}$ and $\mathbb{R}$-divisors referring us to Corollary \ref{co:big}; \\
The first step will be to redefine the notion of semigroup:

\begin{de} Let $D$ be a $\mathbb{R}$-divisor; the \emph{semigroup} of $D$ is the set $$\mathbf{N}(D)= \mathbf{N}(X, D)= \{m \geq 0 | H^0(X, \mathcal{O}_X([mD])) \ne 0\}.$$
\end{de}

\begin{pro}[Bigness for $\mathbb{Q}$-divisors]\label{pro:bigr}  Let $D$ be a $\mathbb{Q}$-divisor on a projective variety $X$. Then the following are equivalent:
\begin{itemize}

\item[I)] $D$ is big;

\item[II)] there exists an integer $a \in \mathbb{N}$ such that $\varphi_{|[mD]|}$ is birational for all $m \in \mathbf{N}(X, D)_{\geq a}$;
 
\item[III)] $\varphi_{|[mD]|}$ is generically finite for some $m \in \mathbf{N}(X, D)$;

\item[IV)] for any coherent sheaf $\mathscr{F}$ on $X$, there exists a positive integer $m=m(\mathscr{F})$ such that $\mathscr{F}\otimes \mathcal{O}_X([mD])$ is generically globally generated, that is such that the natural map $$H^0(X, \mathscr{F} \otimes\mathcal{O}_X([mD])) \otimes_{\mathbb{C}}\mathcal{O}_X \to \mathscr{F} \otimes \mathcal{O}_X([mD])$$ is generically surjective;

\item[V)] for any ample $\mathbb{Q}$-divisor $A$ on $X$, there exists an effective $\mathbb{Q}$-divisor $N$ such that $D\equiv_{lin}A +N$;

\item[VI)] same as in (V) for some ample $\mathbb{Q}$-divisor $A$;

\item[VII)] there exists an ample $\mathbb{Q}$-divisor $A$ on $X$ and an effective $\mathbb{Q}$-divisor $N$ such that $D\equiv_{num} A+N$. 

\end{itemize}

\end{pro}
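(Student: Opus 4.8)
The plan is to reduce everything to the integral case (Corollary \ref{co:big}) by clearing denominators, and to the $\mathbb{R}$-divisor case (Definition \ref{de:bigr} plus Proposition \ref{pro:neb}) where convenient. Fix once and for all an integer $k \in \mathbb{N}$ with $kD \in \text{Div}(X)$, and write $H = kD$. The central observation, analogous to Lemma \ref{lem:dr}, is that for any $m$ we can write $m = tk + i$ with $0 \le i \le k-1$, so that $[mD] = tkD + [iD] = tH + [iD]$; thus up to the finitely many ``correction'' divisors $[iD]$ (which are globally a fixed finite set), the systems $|[mD]|$ behave like $|tH|$. I would carry out the equivalences in the cycle $I \Rightarrow V \Rightarrow VI \Rightarrow VII \Rightarrow I$ and then fold in $II,III,IV$ separately, since those three are the ones genuinely tied to the line bundle $\mathcal{O}_X([mD])$ rather than to numerical/linear equivalence.

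First, $I \Leftrightarrow$ (the $\mathbb{R}$-divisor notion): by Definition \ref{de:bigr}, $D = kD \cdot \tfrac1k$ is big as a $\mathbb{Q}$-divisor iff $kD$ is big as an integral divisor iff $kD$ (hence $D$) is big as an $\mathbb{R}$-divisor — here one uses that $kD$ integral and big is the same in both senses (the Remark preceding this proposition). For $I \Rightarrow V$: if $D$ is big then $kD = H$ is a big integral divisor, so by Corollary \ref{co:big}(5) applied to the integral ample divisor $kA$ (for $A$ any ample $\mathbb{Q}$-divisor, choose the integer multiple making $kA$ integral and ample), there is an integer $\ell$ and an effective integral $N'$ with $\ell H \equiv_{lin} kA + N'$; dividing by $\ell k$ gives $D \equiv_{lin} \tfrac{1}{\ell}A + N$ with $N = \tfrac{1}{\ell k}N'$ effective — then replace $A$ by $\tfrac1\ell A$, or more cleanly absorb the scalar by noting $\tfrac1\ell A$ is still an ample $\mathbb{Q}$-divisor and rescale. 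The implications $V \Rightarrow VI$ and $VI \Rightarrow VII$ are trivial (specialize, and $\equiv_{lin} \Rightarrow \equiv_{num}$). For $VII \Rightarrow I$: if $D \equiv_{num} A + N$ with $A$ ample $\mathbb{Q}$-divisor and $N$ effective $\mathbb{Q}$-divisor, then by Proposition \ref{pro:neb} it suffices to show $A + N$ is big; clearing a common denominator $q$, $q(A+N) = qA + qN$ is an integral ample plus integral effective divisor, hence big by Corollary \ref{co:big}(7) (with $m=1$), so $A+N$ and therefore $D$ is big. This closes the cycle on $I, V, VI, VII$.

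For the line-bundle statements $II, III, IV$: the implication $I \Rightarrow IV$ follows by clearing denominators and applying Corollary \ref{co:big}(4) to $H = kD$ after twisting the coherent sheaf $\mathscr{F}$ by each of the finitely many $\mathcal{O}_X([iD])$, $0 \le i \le k-1$, exactly as in the ``Ample $\Rightarrow$ (II)'' step of Proposition \ref{pro:ar}; take $m$ a common multiple of $k$ large enough to work for all $i$. The implication $IV \Rightarrow III$: take $\mathscr{F} = \mathcal{O}_X(-H)$ for a fixed very ample integral $H$ to get $[mD] - H$ globally generated for some $m$ (necessarily $m \in \mathbf{N}(X,D)$), so $[mD] = ([mD]-H) + H$ is big, hence $\varphi_{|[mD]|}$ is generically finite by Corollary \ref{co:big}(3) applied to the integral divisor $[mD]$. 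For $III \Rightarrow I$: if $\varphi_{|[mD]|}$ is generically finite then $[mD]$ is a big integral divisor (Corollary \ref{co:big}, (3)$\Rightarrow$(1)); write $m = tk + i$, so $[mD] = tkD + [iD]$, whence $tkD \equiv_{lin} [mD] - [iD]$; since $[mD]$ is big and $[iD]$ is a fixed divisor, and $D$ big is equivalent to $[mD]/m \to [D]$ being big numerically — more directly, $h^0(X, \mathcal{O}_X([mD]))$ grows like $m^n$ by Lemma \ref{lem:b1}, and $[mD] = tH + [iD]$ shows $h^0$ of multiples of $H$ (twisted by the fixed $[iD]$) grows like $t^n$, so $H = kD$ is big, so $D$ is big. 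Finally $I \Rightarrow II$: since $H = kD$ is big, by Corollary \ref{co:big}(2) there is $a_0$ with $\varphi_{|tH|}$ birational for $t \ge a_0$, $t \in \mathbf{N}(X,H)$; using $[mD] = tH + [iD] \ge tH$ (pointwise on sections, $|tH| \subseteq |[mD]|$ up to the base locus contributed by $[iD]$ when $[iD]$ is effective — and when $[iD]$ is not effective one argues on a further multiple), birationality of $\varphi_{|tH|}$ forces birationality of $\varphi_{|[mD]|}$ for $m \ge a := k a_0$, $m \in \mathbf{N}(X,D)$.

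I expect the main obstacle to be the bookkeeping in $III \Rightarrow I$ and $I \Rightarrow II$, specifically handling the ``correction'' divisors $[iD]$ when they fail to be effective: the clean inclusion $|tH| \subseteq |[mD]|$ of linear systems only holds when $[iD] \ge 0$, so in the general case one should either pass to the semigroup more carefully (choosing $m$ in $\mathbf{N}(X,D)$ so that $[mD]$ is actually effective and dominating) or, preferably, route the argument through numerical equivalence and the growth estimate of Lemma \ref{lem:b1} rather than through direct containment of linear systems — since bigness is a numerical property (Proposition \ref{pro:neb}), and $[mD] - tH = [iD]$ is numerically a bounded correction, the $h^0$-growth comparison is robust and avoids the effectivity issue entirely. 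All other implications are routine denominator-clearing plus citation of Corollary \ref{co:big}.
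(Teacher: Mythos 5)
Your cycle $I \Rightarrow V \Rightarrow VI \Rightarrow VII \Rightarrow I$ through Proposition \ref{pro:neb} and Corollary \ref{co:big}, together with $I \Rightarrow IV \Rightarrow III$, is correct and is actually organized a bit more economically than the paper's proof (which proves $I \Leftrightarrow II$ separately and then argues each of $(III), \dots, (VII) \Rightarrow I$ directly). Your $I \Rightarrow IV$ (twisting $\mathscr{F}$ by the finitely many $\mathcal{O}_X([iD])$ and applying Corollary \ref{co:big}(4) to $H = kD$) and $IV \Rightarrow III$ (take $\mathscr{F} = \mathcal{O}_X(-H)$ for $H$ very ample) are both sound.

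The two places you flag as ``bookkeeping obstacles'' are, however, genuine gaps, and the fix you propose for them does not work. For $III \Rightarrow I$: statement (III) hands you \emph{one} integer $m_0$ with $[m_0 D]$ big; you do \emph{not} know $h^0(X, \mathcal{O}_X([mD]))$ grows like $m^n$ as $m \to \infty$, so invoking Lemma \ref{lem:b1} at that point is circular --- the growth estimate is precisely what bigness of $D$ would give you, not something you already have. Likewise the decomposition $[m_0D] = tkD + [iD]$ tells you $tkD = [m_0D] - [iD]$, and subtracting a fixed (possibly non-effective) divisor $[iD]$ from a big divisor need not yield a big divisor, so that route also stalls. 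The paper's repair is different and clean: write $m_0 D = [m_0 D] + \{m_0 D\}$ with $\{m_0 D\}$ an \emph{effective} $\mathbb{Q}$-divisor, then choose $\ell$ so that $\ell m_0 D$, $\ell[m_0D]$ and $\ell\{m_0D\}$ are all integral; then $\ell m_0 D = \ell[m_0D] + \ell\{m_0D\}$ is a sum of a big integral divisor and an effective integral divisor, hence big by Corollary \ref{co:big}(7), and $D = \tfrac{1}{\ell m_0}(\ell m_0 D)$ is big. The fractional part $\{m_0D\}$, not the residue divisor $[iD]$, is the object whose effectivity saves the argument.

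For $I \Rightarrow II$ the same issue bites: the containment $|tH| \subseteq |[mD]|$ requires $[iD]$ effective, and ``argue on a further multiple'' does not resolve this, because the residue class $i$ of $m$ mod $k$ is not under your control when you quantify over all large $m \in \mathbf{N}(X,D)$. The paper instead decomposes $kD \equiv_{lin} A + E$ with $A$ ample and $E$ effective integral (Corollary \ref{co:big}(6)), so by Lemma \ref{lem:dr} $[mD] \equiv_{lin} tA + tE + [iD]$. Now choose $s$ so that $sA + [iD]$ is \emph{globally generated} for all $0 \le i \le k-1$ and $r$ so that $rA$ is very ample; for $m \ge k(r+s)$, $[mD] \equiv_{lin} (t-s)A + (sA + [iD]) + tE$ is (very ample) $+$ (globally generated) $+$ (effective), hence birational. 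The ampleness of $A$ is used to absorb the possibly non-effective $[iD]$ into a globally generated divisor --- that is the step your write-up is missing, and it cannot be replaced by passing to numerical equivalence, since (II) is a statement about the actual linear systems $|[mD]|$ rather than about a numerical class.
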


\begin{proof} \emph{}

\begin{itemize}
\item[(I)$\Rightarrow$ (II)] If $D$ is a big $\mathbb{Q}$-divisor, we know that there exists $k \gg 0$ such that $kD$ is a multiple of an integral big divisor, so that, by Corollary \ref{co:big}, $kD\equiv_{lin} A + E $, $A$ ample and $E$ effective $\mathbb{Z}$-divisors. By Lemma  \ref{lem:dr}: $$[mD]\equiv_{lin} tkD + [iD] \equiv_{lin} tA + t E +[iD], \qquad 0\leq i \leq k-1.$$

Let $r \in \mathbb{N}$ such that $rA$ is very ample and let $s \in \mathbb{N}$ such that $sA + [iD]$ is globally generated for all $i=1, \dots, k-1$. Then, for every $m \geq k(r + s)$, we obtain:
$$[mD]\equiv_{lin} (t-s)A + (sA + [iD]) + tE= H + tE ,$$ where $H$ is very ample (for very ample + globally generated is very ample) and $tE$ is effective.
To conclude $$\kappa(X, [mD]) \geq \kappa(X, H) = \dim X,$$
hence $\kappa(X, [mD])= \dim X$.
\item[(II)$\Rightarrow$ (I)] If $(II)$ holds it is also true that $\varphi_{|m(kD)|}$ is birational for all $m \in \mathbf{N}(X, kD)_{\geq ka}$. Now, by Corollary \ref{co:big}, $kD$ is a big integral divisor, and so $D$ is a big $\mathbb{Q}$-divisor by definition.

\item[(III),...,(VII) $\Leftrightarrow$ (I)]
The implication \emph{(I) $\Rightarrow$ (III),...,(VII)} is obvious, in fact if $D$ is big we only need to consider an integer $m$ for which $mD=D'$ is an integral big divisor. Now for those properties it is sufficient that there exists an integer satisfying them. Then we only need to consider the product of this integer with $m$ and we obtain the statement.\\
Now we need to prove that if those properties hold, $D$ is a big divisor.
\begin{itemize}
\item[(III) $\Rightarrow$ (I)] By (III) and by Corollary \ref{co:big} we know that $[mD]$ is big; now $mD= [mD] + \{mD\}$ where $\{mD\}$ is an effective $\mathbb{Q}$-divisor. Also, there exists an integer $k>0$ such that $kmD$ is an integral divisor, where $$kmD= k[mD]+ k\{mD\}$$ that is a sum of a big and an effective integral divisor. Now, by Corollary \ref{co:big}, $kmD$ is big and accordingly $D= \frac{1}{km}(kmD)$ is.

\item[(IV) $\Rightarrow$ (V)] For every ample $\mathbb{Q}$-divisor A, let us consider an effective integral divisor $E$ such that $E -\{A\}$ is effective.  Let us consider $\mathscr{F}=\mathcal{O}_X(-[A] -E)$. Then, by (IV)  there exists $m$ such that $\mathcal{O}_X(-[A]- E) \otimes \mathcal{O}_X([mD])$ is generically globally generated. This implies $H^0(X, \mathcal{O}_X([mD]-[A]-E)) \ne 0$, so that there exists an effective integral divisor $F$ giving
\begin{align*}[mD]\equiv_{lin} [A] + E +F \Rightarrow mD &\equiv_{lin} [mD]+ \{mD\} \equiv_{lin}\\ &\equiv_{lin} A + (E-\{A\})+(F+ \{mD\}),\end{align*} as we wanted.

\item[(V) $\Rightarrow$ (I)] if this property holds, there exists an integer $k>0$ such that $kmD$, $kA$ and $kN$ are integral divisors, where $kA$ is ample and $kN$ is effective. Now, by Corollary \ref{co:big}, $kmD$ is big, and so $D= \frac{1}{km}(kmD)$ is.

\item[(VI) $\Rightarrow$ (I)] like in $(V)$.

\item[(VII) $\Rightarrow$ (I)] like in $(V)$.

\end{itemize}
\end{itemize}

\end{proof}

\begin{re} \label{re:bqr}If $B$ is a big rational divisor and $N$ is an effective rational divisor, then $B+sN$ is big for all $s\in \mathbb{R}, s>0$.
\end{re}
\begin{proof}
If $s \in \mathbb{Q}$ it is obvious by Proposition $\ref{pro:bigr}$. If $s \in \mathbb{R}- \mathbb{Q}$ we only need to choose two positive rational numbers $s_1, s_2$ with $s_1 < s < s_2$ and $t \in [0,1]$ such that $s=ts_1 +(1 - ts_2)$. Then $$B + s N = t(B + s_1 N)+ (1- t)(B + s_2 N)$$ that is a positive linear combination of big $\mathbb{Q}$-divisors.
\end{proof}

\begin{claim}\label{claim:boh} Let $D$ be an $\mathbb{R}$-divisor on a projective variety $X$. Then the following are equivalent:
\begin{enumerate}
\item $D$ is big;
\item There exists an integer $m_0>0$ such that $[mD]$ is a big integral divisor for all $m \geq m_0$;
\item There exists an integer $m_1>0$ such that $[m_1D]$ is a big integral divisor.
\end{enumerate}
\end{claim}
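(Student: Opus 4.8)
The plan is to establish $(2)\Rightarrow(3)\Rightarrow(1)\Rightarrow(2)$. The first implication is immediate: if $[mD]$ is big for every $m\ge m_0$, then in particular $[m_0D]$ is big, so $(3)$ holds with $m_1=m_0$.

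For $(3)\Rightarrow(1)$, write $D=\sum_i a_iD_i$ with the $D_i$ prime, so that $m_1D=[m_1D]+\{m_1D\}$ with $\{m_1D\}=\sum_i\{m_1a_i\}D_i$ an effective $\R$-divisor. Since a positive real multiple of a divisor written as in Definition \ref{de:bigir} is again of that form, it suffices to show that $m_1D$ is a big $\R$-divisor, and for this I would prove the auxiliary statement $(\star)$: the sum of a big integral divisor and an effective $\R$-divisor is a big $\R$-divisor. To prove $(\star)$, induct on the number of prime components of the effective summand carrying an irrational coefficient. If there are none, the sum is a $\Q$-divisor a positive integer multiple of which is (big integral) $+$ (effective integral), hence big by Corollary \ref{co:big}; so the sum is a big $\Q$-divisor (Definition \ref{de:bigr}), hence big in the sense of Definition \ref{de:bigir}. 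For the inductive step, if a prime $E_1$ occurs with irrational coefficient $c_1$, pick rationals $c_1^-<c_1<c_1^+$ and $t\in(0,1)$ with $c_1=tc_1^-+(1-t)c_1^+$ and write
$$B+E=t\Bigl(B+c_1^-E_1+\sum_{j\ge 2}c_jE_j\Bigr)+(1-t)\Bigl(B+c_1^+E_1+\sum_{j\ge 2}c_jE_j\Bigr);$$
each parenthesis is a big $\Q$-divisor plus an effective $\R$-divisor with one fewer irrational coefficient, hence big by induction, and a positive combination of big $\R$-divisors is visibly big. (This is the convexity device of Remark \ref{re:bqr}.) Applying $(\star)$ with $B=[m_1D]$ and $E=\{m_1D\}$ finishes $(3)\Rightarrow(1)$.

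The substantial implication is $(1)\Rightarrow(2)$. Write $D=\sum_i b_iB_i$ with $b_i>0$ real and $B_i$ big integral, and apply Note \ref{nota:utile} with these $B_i$ as the (not necessarily prime) $D_i$: for every $m\ge 1$ one has $[mD]=\sum_i[mb_i]B_i+T_m$, where $T_m$ runs through a finite set $\{T_{k_1},\dots,T_{k_s}\}$ of integral divisors. Fix an ample integral divisor $A$. Since $B_1$ is big, Corollary \ref{co:big} furnishes $p\in\N$ and an effective integral divisor $N$ with $pB_1\equiv_{lin}A+N$. The negative parts $(T_{k_\ell})^-$ form a finite family of fixed effective divisors, so there is $Q\in\N$ with $QA-(T_{k_\ell})^-$ ample for every $\ell$ (openness of ampleness); hence $QA+T_{k_\ell}=(QA-(T_{k_\ell})^-)+(T_{k_\ell})^+$ is ample plus effective, so big, for every $\ell$. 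Choose $m_0$ so large that $[mb_1]\ge Qp+1$ for all $m\ge m_0$. Then for $m\ge m_0$, using $Qp\,B_1\equiv_{lin}QA+QN$,
$$[mD]\equiv_{lin}(QA+T_m)+QN+([mb_1]-Qp)B_1+\sum_{i\ge 2}[mb_i]B_i,$$
which exhibits $[mD]$, up to linear equivalence, as (big integral) $+$ (effective integral) $+$ (positive multiple of the big divisor $B_1$) $+$ (non-negative integer multiples of the big divisors $B_i$). By Corollary \ref{co:big} (big $+$ big and big $+$ effective are big) this is big, and since bigness is invariant under linear — indeed numerical — equivalence (Proposition \ref{pro:neb}), $[mD]$ is big for all $m\ge m_0$, which is $(2)$.

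The main obstacle is precisely the correction term $T_m$ in $(1)\Rightarrow(2)$: it stays bounded but is not effective in general, so $[mD]$ is not literally ``big plus effective''. The fix is to peel off a large multiple $Qp\,B_1$ of the linearly growing part $\sum_i[mb_i]B_i$, replace it by $QA+QN$ via Corollary \ref{co:big}, and absorb the bounded $T_m$ into the ample piece $QA$ — which is legitimate because $QA-(T_{k_\ell})^-$ stays ample for $Q\gg 0$ and the finitely many $T_{k_\ell}$. The only other point requiring care is the auxiliary fact $(\star)$ used in $(3)\Rightarrow(1)$, disposed of by the convexity argument already appearing in Remark \ref{re:bqr}.
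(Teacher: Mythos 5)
Your proof is correct and rests on the same two pillars as the paper's argument: the decomposition $[mD]=\sum[mb_i]B_i+T_m$ from Note \ref{nota:utile} for $(1)\Rightarrow(2)$, and the convexity device of Remark \ref{re:bqr} to absorb the real fractional part $\{m_1D\}$ for $(3)\Rightarrow(1)$. The execution differs at two points. In $(1)\Rightarrow(2)$ the paper writes $[mD]\equiv_{lin}\sum[ma_i](A_i+E_i)+T_k$, implicitly assuming that each big summand $D_i$ is \emph{itself} linearly equivalent to (ample)$\,+\,$(effective), which strictly speaking requires first rescaling the decomposition $D=\sum a_iD_i$ so that each $D_i$ already has that form; it then absorbs the bounded $T_k$ by choosing $s$ with $sA_1+T_k$ globally generated. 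You instead peel off $Qp\,B_1$, substitute $QA+QN$ from Corollary \ref{co:big}, and absorb $T_m$ into $QA$ using that $QA-T_m^-$ stays ample for $Q\gg 0$. This keeps the bookkeeping entirely at the level of ``big $+$ effective $+$ big'' and sidesteps the rescaling. In $(3)\Rightarrow(1)$ the paper passes through $\equiv_{num}$ and Proposition \ref{pro:neb} and writes the sum directly as the convex combination $\sum\frac{1}{s}(A+E+s\{m_1a_i\}D_i)$ of terms covered by Remark \ref{re:bqr}, whereas you isolate the cleaner auxiliary statement $(\star)$ (big integral $+$ effective real is big) and prove it by induction on the number of irrational coefficients. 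Both routes work; yours makes the convexity step reusable while the paper's is more ad hoc but shorter. One tiny wording slip in your induction: in the inductive step each parenthesis should be described as a \emph{big integral} divisor $B$ plus an effective $\mathbb{R}$-divisor with one fewer irrational coefficient, not as ``a big $\mathbb{Q}$-divisor plus \ldots''; the induction hypothesis as you stated $(\star)$ is about a big integral $B$, and that is exactly what each term is.
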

\begin{proof} \emph{}

\begin{itemize}
\item[1 $\Rightarrow$ 2)] If $D$ is a big divisor then $D=\sum a_i D_i$, $a_i \in \mathbb{R}$, $a_i >0$ and $D_i \in \text{Div}(X)$ big divisors. By Note \ref{nota:utile} we can write $[mD]= \sum[ma_i]D_i + T_k$ for finitely many integral divisors $T_k$. We also point out that $\{mD\} \in \text{Div}_{\mathbb{R}}(X)$ is an effective divisor.\\
By Corollary \ref{co:big} 
  $$ [mD]\equiv_{lin} \sum [ma_i](A_i+ E_i) + T_k $$ where $A_i$ is an ample integral divisor and $E_i$ is an effective integral divisor. Now we can choose $r	\in \mathbb{N}$ such that $tA_i$ is very ample for every $i$ and for every $t\geq r$. Let $s \in \mathbb{N}$ such that $sA_1 + T_k$ is globally generated for every $k$. If we take $m_0$ such that $[m_0a_i] \geq r \; \forall i$ and $[m_0a_1] \geq r + s$, then for all $m \geq m_0$
$$[mD] \equiv_{lin} \sum_{i \geq 2} [ma_i](A_i + E_i) + ([ma_1] - s)A_1 + [ma_1]E_1 +(sA_1 +T_k)$$ where $\sum_{i \geq 2} [ma_i]A_i + ([ma_1] - s)A_1 +(sA_1 +T_k) = H$ is a very ample integral divisor and $\sum_i [ma_i]E_i = E$ is an effective integral divisor so that $$\kappa(X, [mD]) \geq \kappa(X, H) = \dim X,$$ and we get the statement.

\item[2 $\Rightarrow$ 3)] Trivial.

\item[3 $\Rightarrow$ 1)]We have that $[m_1D]$ is big.
Now by Proposition \ref{co:big} there exist an ample integral divisor $A$ and an effective integral divisor $E$ such that $$[m_1D] \equiv_{num} A + E \Rightarrow m_1D \equiv_{num} A + (E + \{ m_1D \})$$ that is the sum of an ample and an effective $\mathbb{R}$-divisor. By Proposition \ref{pro:neb} it is enough to prove that $A +E + \{mD\}$ is big. So we get the statement by Remark \ref{re:bqr}: if $D= \sum a_i D_i$, $a_i\in\mathbb{R}$, $D_i$ prime divisors, then $$\{m_1D\}= \sum_{i=1}^{s}\{m_1a_i\}D_i$$ and we can write $$A + (E + \{ m_1D \})=\sum_{i=1}^{s}\frac{1}{s}(A+ E + s\{m_1a_i\}D_i) .$$
\end{itemize}
\end{proof}

\vskip 0.5cm
Now we are discussing the case of $\mathbb{R}$-divisors:

\begin{pro}[Bigness for $\mathbb{R}$-divisors]\label{pro:bigir} Let $D$ be an $\mathbb{R}$-divisor on a projective variety $X$. The following are equivalent:
\begin{enumerate}

\item[(i)] $D$ is big;

\item[(ii)] there exists an integer $a \in \mathbb{N}$ such that $\varphi_{|[mD]|}$ is birational for all $m \in \mathbf{N}(X, D)_{\geq a}$;
 
\item[(iii)] $\varphi_{|[mD]|}$ is generically finite for some $m \in \mathbf{N}(X, D)$;

\item[(iv)] for any coherent sheaf $\mathscr{F}$ on $X$, there exists a positive integer $m=m(\mathscr{F})$ such that $\mathscr{F}\otimes \mathcal{O}_X([mD])$ is generically globally generated, that is such that the natural map $$H^0(X, \mathscr{F} \otimes\mathcal{O}_X([mD])) \otimes_{\mathbb{C}}\mathcal{O}_X \to \mathscr{F} \otimes \mathcal{O}_X([mD])$$ is generically surjective;

\item[(v)] for any ample $\mathbb{R}$-divisor $A$ on $X$, there exists an effective $\mathbb{R}$-divisor $N$ such that $D \equiv_{num}A +N$;

\item[(vi)] same as in (v) for some ample $\mathbb{R}$-divisor $A$;

\end{enumerate}
\end{pro}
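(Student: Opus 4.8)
The plan is to reduce everything to the $\mathbb{Q}$-divisor case (Proposition \ref{pro:bigr}) and to Claim \ref{claim:boh}, exactly as was done for ampleness. The backbone is the equivalence (i) $\Leftrightarrow$ (iii): by Claim \ref{claim:boh}, $D$ is big if and only if $[m_1 D]$ is a big integral divisor for some $m_1$, and by Corollary \ref{co:big} (part (3)) this happens iff $\varphi_{|[m_1 D]|}$ is generically finite for some $m_1$. So (i) $\Leftrightarrow$ (iii) is essentially immediate. The implications (i) $\Rightarrow$ (ii), (i) $\Rightarrow$ (iv), (i) $\Rightarrow$ (v), (i) $\Rightarrow$ (vi) should all follow by passing to a single integer $m_0$ with $[m_0 D]$ big (Claim \ref{claim:boh}) and then invoking the corresponding statement of Corollary \ref{co:big} for the integral big divisor $[m_0 D]$, together with the ``composition of integers'' trick: any property guaranteed by Corollary \ref{co:big} for $[m_0 D]$ and its multiples transfers to $[m D]$ for $m$ a multiple of $m_0$, and for general large $m$ one uses Lemma \ref{lem:dr}-style bookkeeping (write $[mD] = t\,[m_0 D] + (\text{effective})$ modulo the finitely many correction divisors $T_k$ of Note \ref{nota:utile}) to absorb the remainder into an effective divisor, preserving bigness/generic global generation.

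For the reverse implications, I would handle (ii) $\Rightarrow$ (i) and (iii) $\Rightarrow$ (i) first: if $\varphi_{|[mD]|}$ is birational (resp. generically finite) for some $m$, then $[mD]$ is big by Corollary \ref{co:big}, hence $D$ is big by Claim \ref{claim:boh} (3 $\Rightarrow$ 1). For (iv) $\Rightarrow$ (i): choosing $\mathscr{F} = \mathcal{O}_X(-A)$ for a fixed integral ample $A$, generic global generation of $\mathcal{O}_X(-A) \otimes \mathcal{O}_X([mD])$ forces $H^0(X, \mathcal{O}_X([mD] - A)) \ne 0$, so $[mD] \equiv_{lin} A + F$ with $F$ effective, whence $[mD]$ is big (Corollary \ref{co:big}(5),(6),(7)) and again Claim \ref{claim:boh} finishes. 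For (v) $\Rightarrow$ (i) and (vi) $\Rightarrow$ (i): from $D \equiv_{num} A + N$ with $A$ an ample $\mathbb{R}$-divisor and $N$ effective, Proposition \ref{pro:neb} reduces us to showing $A + N$ is big; since $A$ is an ample $\mathbb{R}$-divisor it is in particular big (any ample integral divisor is big, and $A = \sum c_i A_i$), so $A + N$ is a sum of a big $\mathbb{R}$-divisor and an effective one — this is big by the $\mathbb{R}$-analogue of Remark \ref{re:bqr}, which one gets by writing $A + N = \sum \frac{1}{s}(A_{\text{part}} + E_{\text{part}} + sN_i D_i)$ and reducing to the rational case via convex combination as in Remark \ref{re:bqr}.

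The main obstacle, and the place requiring genuine care rather than routine bookkeeping, is the passage ``for \emph{any} ample $\mathbb{R}$-divisor $A$'' in (v) for the forward direction (i) $\Rightarrow$ (v): given a big $\mathbb{R}$-divisor $D$ and an arbitrary ample $\mathbb{R}$-divisor $A$, I must produce an effective $\mathbb{R}$-divisor $N$ with $D \equiv_{num} A + N$. The natural route is: since $D$ is big, $D \equiv_{num} A_0 + N_0$ for \emph{some} ample integral $A_0$ and effective $N_0$ (this is (vi), obtained from Claim \ref{claim:boh} plus Corollary \ref{co:big}); then $D - A = (A_0 - A) + N_0$, and since the ample cone is open and $A_0$ can be taken ``large'' (replace $A_0$ by a large multiple of an ample generator and absorb the difference into $N_0$ — formally, use that $D - \varepsilon A$ is still big for small $\varepsilon > 0$ by continuity of bigness on the big cone, Proposition \ref{pro:neb} together with openness), one arranges $A_0 - A$ to be ample, hence $D - A$ is big, hence numerically equivalent to an effective $\mathbb{R}$-divisor. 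This step genuinely uses that the big cone is open in $N^1(X)_{\mathbb{R}}$, which follows from Claim \ref{claim:boh} and the corresponding integral fact, so it should be stated carefully but presents no real difficulty. I would therefore organize the write-up as: (i) $\Leftrightarrow$ (iii) via Claim \ref{claim:boh} and Corollary \ref{co:big}; then a short paragraph deriving (i) $\Rightarrow$ (ii), (iv), (v), (vi); then the four reverse implications, each a two-line reduction to the integral case.
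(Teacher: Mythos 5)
Your backbone---Claim \ref{claim:boh} plus Corollary \ref{co:big}, pivoting on the equivalence of bigness of $D$ with bigness of some $[m_1D]$---is exactly the paper's, and your shortcuts (i)$\Leftrightarrow$(ii)$\Leftrightarrow$(iii) and (iv)$\Rightarrow$(i) are sound. The genuine gap is in your (i)$\Rightarrow$(v). You propose to ``arrange $A_0 - A$ to be ample'' by ``taking $A_0$ large,'' in order to conclude that $D - A$ is big, hence numerically equivalent to an effective divisor. But $A_0$ is not free to enlarge: the relation $D \equiv_{num} A_0 + N_0$ with $N_0$ effective pins it down, and scaling $A_0$ up destroys the effectivity of $N_0$. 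Indeed $D-A$ is \emph{not} big in general when $A$ is an arbitrary ample divisor (on $\mathbb{P}^1$ with $D$ a point and $A$ ten points, $D-A$ has negative degree), so no appeal to openness of the big cone can salvage the claim as you phrase it. The paper obtains (v) by going through (iv)$\Rightarrow$(v): apply generic global generation to the sheaf $\mathscr{F} = \mathcal{O}_X(-[A]-E)$, with $E$ an integral effective divisor chosen so that $E - \{A\}$ is effective, to produce a section of $[mD]-[A]-E$ and then absorb the fractional parts. That is the route you need; your direct cone argument does not work.

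Two further places you sketch more loosely than they can be. For (i)$\Rightarrow$(iv), the appeal to ``Lemma \ref{lem:dr}-style bookkeeping'' misfires: Lemma \ref{lem:dr} requires an integer $k$ with $kD$ integral, which does not exist for a genuine $\mathbb{R}$-divisor. What actually makes your shortcut work (take $m = n m_0$ with $[m_0D]$ big and invoke Corollary \ref{co:big}(4) for $[m_0D]$) is the superadditivity of the floor, $[nm_0a_i]\geq n[m_0a_i]$, which makes $[nm_0D]-n[m_0D]$ effective, together with the observation that tensoring a generically globally generated sheaf by an effective line bundle preserves generic global generation. That is a legitimate alternative to the paper, which instead writes $D=\sum a_iD_i$ with each $D_i$ big integral and dominates a common ample $H$ via $m_iD_i\equiv_{lin}H+E_i$; either works, but say which mechanism you are using. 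For (vi)$\Rightarrow$(i), do not wave at ``the $\mathbb{R}$-analogue of Remark \ref{re:bqr}'': that remark is specifically about a big \emph{rational} divisor plus an effective rational divisor with a real coefficient. The paper first perturbs $A$ into a $\mathbb{Q}$-ample $H$ (openness of the ample cone, pushing the $\varepsilon_iE_i$ corrections into $N$) and then writes $D\equiv_{num}\sum\bigl(\tfrac1sH+c_iD_i\bigr)$ so that each summand falls under Remark \ref{re:bqr}; that perturbation step is genuinely needed and should appear in your write-up.
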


\begin{proof}

\emph{}

\begin{itemize}
\item[(i)$\Rightarrow$ (ii) ] As in the proof of Claim \ref{claim:boh} there exists a positive integer $m_0$ such that we can write $$[mD]\equiv_{lin}H + E$$ for all $m \geq m_0$, where $H$ is a very ample integral divisor and $E$ is an effective integral divisor. Then, obviously, $\varphi_{|[mD]|}$ is birational for all $m\geq m_0$.

\item[(ii)$\Rightarrow$ (iii) ] Trivial.

\end{itemize}

The implication \emph{(iii)$\Rightarrow$ (i)} is obvious, in fact like above we only need to consider the integer $m$ for which $[mD]$ is big and the statement follows by Claim \ref{claim:boh}.

\vskip 0.5cm

\begin{itemize}

\item[(i)$\Rightarrow$ (iv) ]  $D$ is a big $\mathbb{R}$-divisor, then $D=\sum a_i D_i$, $a_i \in \mathbb{R}$, $a_i >0$ and $D_i$ big integral divisor. Also by Note \ref{nota:utile} we can write $[mD] = \sum [ma_i]D_i + T_k$ for finitely many integral divisors $T_k$. Let $A$ be an ample integral divisor, then there exists $m_0=m(\mathscr{F}(T_k))$ such that $\mathscr{F}(T_k)(m_0A)$ is globally generated for all $k$. 
Let us denote $m_0A=H$\\
Since $D_i$ is a big integral divisor, by Corollary \ref{co:big} there exists $m_i \in \mathbb{N}$ such that $m_iD_i \equiv_{lin} H + E_i$ where $E_i$ is an effective integral divisor. Also, since $D_i$ is big and integral, there exists $n_i$ such that $n D_i $ is effective for all $n \geq n_i$.  

Let  $m\gg 0 $ such that $[ma_i] - m_i\geq n_i$  for all $i$, then 
$$\mathscr{F}(T_k)([mD])= \mathscr{F}(T_k)(\sum_{i=1}^s (([ma_i] - m_i)D_i + m_iD_i))$$
where $([ma_i] - m_i)D_i$ is effective and $m_iD_i \equiv_{lin} H + E_i$.\\ 
Then $\mathscr{F}(T_k)([mD])=\mathscr{F}(T_k)(sH + E)$ where $E$ is effective and  $\mathscr{F}(T_k)(sH)$ is globally generated and we are done.

\item[(iv)$\Rightarrow$ (v) ] Like in Proposition \ref{pro:bigr} \emph{(IV)$\Rightarrow$ (V)} where we replace $\equiv_{lin}$ by $\equiv_{num}$.

\item[(v)$\Rightarrow$ (vi) ] Trivial. 

\item[(vi)$\Rightarrow$ (i) ] By (vi) we have that $D\equiv_{num}A + N$ where $A$ is an ample and $N$ is an effective $\mathbb{R}$-divisor. By the openness of amplitude,there exist $E_1, \ldots, E_r$ effective $\mathbb{R}$-divisors, such that the divisor $$H = A -\varepsilon_1 E_1 - \ldots -\varepsilon_rE_r $$ is an ample $\mathbb{Q}$-divisor, for $0<\varepsilon_i \ll 1$. Also, $M = N + \varepsilon_1 E_1 + \ldots +\varepsilon_rE_r$ is an effective $\mathbb{R}$-divisor.\\ 
Now we can write $M= \sum_{i=1}^s c_i D_i$ where $c_i\in \mathbb{R}, \; c_i>0$ and $D_i$ is a prime divisor. \\
Then the results follows by Remark \ref{re:bqr} and Proposition \ref{pro:neb}, since we can write $$D \equiv_{num} \sum_{i=1}^{s} \left( \frac{1}{s}H + c_iD_i\right).$$

\end{itemize}

\end{proof}

\addcontentsline{toc}{section}{Bibliography}
\nocite{*}
\bibliographystyle{alpha}  
\bibliography{bibliografia}

\end{document}